\documentclass[a4paper,11pt]{article}

\usepackage{graphicx}
\usepackage[affil-it]{authblk}

\usepackage{alltt} 

\usepackage{amsfonts}
\usepackage{amsmath}
\usepackage{amssymb}
\usepackage{amsthm}

\newtheorem{theorem}{Theorem}[section]
\newtheorem{proposition}[theorem]{Proposition}
\newtheorem{lemma}[theorem]{Lemma}
\newtheorem{corollary}[theorem]{Corollary}

\theoremstyle{definition}
\newtheorem{definition}[theorem]{Definition}
\theoremstyle{remark}
\newtheorem{remark}[theorem]{Remark}
\theoremstyle{remark}

\theoremstyle{remark}

\newcommand{\G}{G}


\begin{document}

\title{Presentations of Topological Full Groups by Generators and Relations}

\author{Rostislav Grigorchuk%
  \thanks{The research of the first author was supported by NSF grant DMS-1207669.}}
\affil{Department of Mathematics, Texas A\&M, College Station, TX \\ grigorch@math.tamu.edu}

\author{Konstantin Medynets%
  \thanks{The second author was supported by NSA  grant H98230CCC5334.}}
\affil{Department of Mathematics, U.S. Naval Academy, Annapolis, MD  \\ medynets@usna.edu}

\date{}
\maketitle

\begin{abstract} We describe generators and defining relations for the  commutator subgroup of topological full groups of minimal subshifts. We show that the  word problem  in a topological full group is solvable if and only if the language of the underlying subshift is recursive.
\end{abstract}

\section{Introduction}  Topological full groups  (abbreviated as  TFGs) first appeared in the theory of crossed-product $C^*$-algebras. A TFG can be defined as the group of automorphisms of a crossed product $C^*$-algebra preserving the maximal Abelian subalgebra  modulo its center \cite[Lemma 5.1 and Theorem 5.2]{Putnam:1989}, see also \cite[Theorem 1]{Tomiyama:1996}. The TFGs  were proven to be  complete invariants of the restricted isomorphism class of the crossed-product $C^*$-algebra \cite[Theorem 2]{Tomiyama:1996}.

The topological full groups also play a major role in the classification theory of symbolic dynamical systems. It turns out that two minimal dynamical systems are flip conjugate (recall that two dynamical systems are called {\it flip conjugate} if they are conjugate or if one is conjugate to the inverse of the other) if and only if the associated TFGs  are isomorphic as abstract groups  \cite{GirdanoPutnamSkau:1999}, \cite{BoyleTomiyama}. We note that the term ``topological'' in TFG has been historically used to refer  to the fact that these groups are associated with topological dynamical systems. No group topology is assumed on TFGs.

Recently, the construction of topological full groups and the interplay between their algebraic properties and the dynamical properties of underlying symbolic systems were used to   establish the existence of {\it infinite  finitely generated simple amenable  groups}. It turns out that the commutator subgroups of TFGs associated with minimal subshifts over finite alphabets have the desired characteristics \cite{JuschenkoMonod}.
 The  discussion of  algebraic properties of full groups can be found in    \cite{BezuglyiMedynets:2008}, \cite{GirdanoPutnamSkau:1999},  \cite{GrigorchukMedynets},  \cite{Matui:2006, Matui:2013}, and \cite{Medynets:2011}.

The goal of the  paper is to find a presentation of topological full groups and relate it to properties of the underlying dynamical system.  Theorem \ref{TheoremIntroMain} is the main result of  the paper.   It describes the set of defining relations and identifies groups with solvable word problem. The result is established in    Theorem \ref{TheoremMain}, Theorem \ref{TheoremTietzeTransformations}, and Theorem \ref{TheoremWordProblem}.

 Let $(\Omega,T)$ be a minimal subshift over a finite alphabet. Denote by $G_T$ the topological full group of $(\Omega,T)$ and $G_T'$ the commutator subgroup of $G_T$ (Definition \ref{DefinitionFullGroup}).  Denote by $L_n(\Omega)$ the set of words of length $n$ appearing in sequences of $\Omega$, $n\geq 1$. The language of the subshift is defined as $L(\Omega) = \bigcup_{n\geq 1} L_n(\Omega)$. The base of the topology on $\Omega$ comprises cylinder sets  $(v,i) = \{\omega\in \Omega : \omega_{-i} = v_0,\ldots,\omega_{|v|-i} = v_{|v|-1}\}$, where $v\in L(\Omega)$, $i\in \mathbb Z$, and $|v|$ is the length of the word $v$. By a {\it cylinder partition} of $(v,i)$, we mean a partition into cylinder sets, see Definition \ref{DefinitionDisjoint}. In the following result, we use symbols $x_{(v,i)}$, $(v,i)\in L(\Omega)\times \mathbb Z$, as a base for the free group.

 \begin{theorem}\label{TheoremIntroMain}   Let $(\Omega,T)$ be a minimal subshift over a finite alphabet.
 (1) There exists $n\geq 3$ such that the commutator subgroup of the topological full group $G_T'$ is isomorphic to the group $\Gamma_\Omega$ generated by  $$<x_{(w,k)}, w\in L(\Omega),\; |w|\geq n,\; k\in \mathbb Z>,$$  subject to the following relations: for every $w,v\in L(\Omega)$, $|w|,|v|\geq n$, $i,j\in \mathbb Z$,  and a cylinder partition $C$ of $(w,i)$,
\begin{flalign}
& \left(x_{(w,i)}\right)^3=1  \label{RelIntro1} \\
& \left(x_{(w,i)}\cdot x_{(w,i+1)}\right)^2 = 1    \\
& x_{(w,i+1)} = x_{(w,i+2)} x_{(w,i)}^{-1} x_{(w,i+2)}^{-1} x_{(w,i)} \\
& x_{(w,i)} = \prod_{(s,k) \in C}x_{(s,k)}
\end{flalign}
and
\begin{equation} \label{RelIntro5}  [x_{(w,i)},x_{(v,j)}] = 1,
\end{equation}
whenever the cylinder sets $(w,i)$, $(w,i+1)$, $(w,i+2)$, $(v,j)$, $(v,j+1)$, $(v,j+2)$ are mutually disjoint.

Furthermore, the group $\Gamma_\Omega$ is generated by the elements $<x_{(w,1)}$, $w\in L_n(\Omega)>$.

(2) The set of relations (\ref{RelIntro1})--(\ref{RelIntro5}) is recursive if and only if  the language $L(\Omega)$ is recursive. If $L(\Omega)$ is recursive, then $n$ is effectively computable.

(3) The group $G_T'$ has decidable word problem if and only if $L(\Omega)$ is recursive.
 \end{theorem}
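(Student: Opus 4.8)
The plan is to identify $\Gamma_\Omega$ with $G_T'$ via a pair of mutually inverse homomorphisms, and then to read the computability statements (parts (2) and (3)) off that identification. The finite-orbit case (where $G_T'$ is a finite alternating group) is an easy exception, so assume $(\Omega,T)$ is infinite; then $T$ has no periodic orbit, so for each $p\le 2$ only finitely many period-$p$ words lie in $L(\Omega)$. Choose $n\ge 3$ bigger than the length of the longest such word, so that every $w\in L(\Omega)$ with $|w|\ge n$ has no self-overlap of shift $\le 2$; in particular the cylinders $(w,i),(w,i+1),(w,i+2),(w,i+3),(w,i+4)$ are then pairwise disjoint for every $i\in\mathbb Z$. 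For such $(w,i)$ let $\tau_{(w,i)}\in G_T$ be the homeomorphism equal to $T$ on $(w,i)$, to $T$ on $(w,i+1)$, to $T^{-2}$ on $(w,i+2)$, and to the identity elsewhere; this is a $3$-cycle on the triple $(w,i),(w,i+1),(w,i+2)$, hence lies in $G_T'$. I would then check that $x_{(w,i)}\mapsto\tau_{(w,i)}$ respects (\ref{RelIntro1})--(\ref{RelIntro5}): (\ref{RelIntro1}) and (\ref{RelIntro2}) are the order-$3$ relation and the ``product of two overlapping $3$-cycles has order $2$'' relation inside $A_4$; (\ref{RelIntro3}) is checked by conjugating the $3$-cycle on $(w,i),(w,i+1),(w,i+2)$ by the $3$-cycle on $(w,i+2),(w,i+3),(w,i+4)$, which returns the $3$-cycle on $(w,i+1),(w,i+2),(w,i+3)$; (\ref{RelIntro4}) holds because $T$ carries a cylinder partition of $(w,i)$ to cylinder partitions of $(w,i+1)$ and $(w,i+2)$, with the attached triples still pairwise disjoint; and (\ref{RelIntro5}) is disjointness of supports. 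This produces a homomorphism $\varphi\colon\Gamma_\Omega\to G_T'$. That $\varphi$ is surjective follows from the known description (Matui) of $G_T'$ for a minimal subshift as the subgroup generated by $3$-cycles $(U,gU,g^2U)$ with $g\in G_T$ and $U$, $gU$, $g^2U$ pairwise disjoint clopen sets: cut $U$ into cylinder pieces on which $g$ is a fixed power of $T$, and use that a ``$3$-cycle by a large $T$-step'' is a product of ``$3$-cycles by a single $T$-step'' (the fact that the $3$-cycles $(k,k+1,k+2)$ generate the alternating group) to rewrite the $3$-cycle as a product of $\tau_{(w,i)}$'s.

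The crux is the injectivity of $\varphi$, i.e.\ that (\ref{RelIntro1})--(\ref{RelIntro5}) are defining, and I would attack it by a rewriting/normal-form argument: using only these relations, bring an arbitrary word in the generators to a canonical form that depends only on the homeomorphism it represents. The steps I envisage are: (i) via the partition relations (\ref{RelIntro4}), replace each generator $x_{(w,i)}$ with $|w|<N$ appearing in the word by the product of the $x_{(wu,i)}$ over admissible right extensions $u$ with $|wu|=N$, so that all generators have a common length $N$; (ii) recognise that at fixed length $N$ the ``permutation'' content of a word is governed by (\ref{RelIntro1}), (\ref{RelIntro2}), (\ref{RelIntro5}), which are, in the spirit of Carmichael's presentation of alternating groups by $3$-cycles of the form $(k,k+1,k+2)$, a presentation of the relevant finite permutation data, with (\ref{RelIntro3}) the extra local relation tying together neighbouring indices; (iii) verify that (\ref{RelIntro3}) and (\ref{RelIntro4}) are exactly what is needed so that these level-$N$ canonical forms remain compatible as $N$ grows and as the index window moves, and that the resulting ``limit'' description recovers \emph{all} of $G_T'$. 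Because a minimal subshift gives a $G_T'$ that is infinite and finitely generated, hence not locally finite, step (iii) is \emph{not} a formal direct-limit-of-finite-groups argument — the index set is all of $\mathbb Z$, and reconciling the many length/index decompositions of a single element using only (\ref{RelIntro3}), (\ref{RelIntro4}), (\ref{RelIntro5}) (and here the aperiodicity bound defining $n$ is essential) is the combinatorial heart of the matter; I expect this to be the main obstacle, and it is the content of Theorem \ref{TheoremMain}. Granting injectivity, the ``furthermore'' clause comes from a commutator argument inside $G_T'$: minimality makes the cylinder structure rich enough that commutators of products of the $3$-cycles $\{\tau_{(w,1)}:w\in L_n(\Omega)\}$ produce $3$-cycles on arbitrarily fine cylinder triples at every index, so this finite set already generates — which in particular re-establishes that $G_T'$ is finitely generated.

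For part (2): deciding whether a word in the free generators is one of the listed relators amounts to recognising valid generators $x_{(w,i)}$ — equivalently deciding $w\in L(\Omega)$ for $|w|\ge n$ — and recognising when a finite family of cylinders is a cylinder partition of a given cylinder; the latter reduces, by the finiteness of the combinatorics of cylinders, to finitely many membership queries in $L(\Omega)$. So the relation set is recursive iff $L(\Omega)$ restricted to words of length $\ge n$ is recursive, and since $L(\Omega)$ is factor-closed this is equivalent to $L(\Omega)$ being recursive. If $L(\Omega)$ is recursive then $n$ is computable: $n$ is determined by the longest word of period at most $2$ in $L(\Omega)$, and because $L(\Omega)$ is factor-closed, once a period-$p$ word of some length is found to be outside $L(\Omega)$ all longer ones are too, so searching upward with the decision procedure for $L(\Omega)$ terminates with the required bound.

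For part (3): if $L(\Omega)$ is recursive, then cylinder containment, intersection and partition are decidable, so the normal-form procedure of the second paragraph becomes an effective algorithm (computing a common level, the organising tower data, and the finite-permutation normal form are all doable), giving a solution of the word problem of $G_T'\cong\Gamma_\Omega$. Conversely, suppose $G_T'$ has solvable word problem with respect to the finite generating set $\{x_{(w,1)}:w\in L_n(\Omega)\}$ from part (1). For a word $u$ with $|u|\ge n$, the homeomorphism that cyclically permutes $(u,1),(u,2),(u,3)$ along $T$ lies in $G_T'$ and is nontrivial exactly when $u\in L(\Omega)$; expressing it uniformly as a word in the finite generating set — the same commutator construction as above, which collapses to the trivial word precisely when those cylinders are empty, i.e.\ when $u\notin L(\Omega)$ — and testing that word against the identity with the word-problem algorithm decides $u\in L(\Omega)$; factor-closedness of $L(\Omega)$ extends the decision to all words. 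The delicate point is the uniformity of this construction (producing the test word from $u$ by a fixed procedure regardless of whether $u\in L(\Omega)$), which is where I would expect the argument to need the most care.
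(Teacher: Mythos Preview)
Your surjectivity argument, the computability of $n$, and the converse direction of part (3) are all essentially in line with the paper (with one minor slip: to make relation (\ref{RelIntro3}) well-defined you need the five cylinders $(w,i),\dots,(w,i+4)$ pairwise disjoint, so the relevant bound is on self-overlaps of shift $\le 4$, not $\le 2$). The substantive gap is exactly where you flag it: the injectivity of $\varphi$.

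Your proposed ``normal form at a common level $N$'' does not go through as stated, and the paper does \emph{not} resolve the difficulty by a rewriting argument. The obstruction you identify is real: at a fixed word-length $N$ the index $i$ still ranges over all of $\mathbb Z$, so the relations (\ref{RelIntro1})--(\ref{RelIntro3}), (\ref{RelIntro5}) do \emph{not} present a finite group and there is no Carmichael-style normal form to fall back on. The paper's device for taming this is structural rather than combinatorial. Fix two points $\omega,\omega'$ on distinct $T$-orbits and build Kakutani--Rokhlin partitions over each of them using return words. The group $G_{T,\omega}'$ of even ``tower permutations'' is then a genuine increasing union of finite direct products of alternating groups, and the relations (\ref{RelIntro1})--(\ref{RelIntro5}) restricted to the corresponding generators of $\Gamma_\Omega$ contain the Vershik--Vsemirnov presentation of each factor together with the inclusion maps; simplicity of $G_{T,\omega}'$ then forces the obvious surjection $G_{T,\omega}'\twoheadrightarrow\Gamma_\omega$ to be an isomorphism. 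The crucial input you are missing is the product decomposition $G_T'=G_{T,\omega}'\cdot G_{T,\omega'}'$ (Theorem \ref{TheoremPermutationProduct}): using it one defines a set-theoretic map $\phi:G_T'\to\Gamma_\Omega$ by $\phi(g_\omega g_{\omega'})=\phi_\omega(g_\omega)\phi_{\omega'}(g_{\omega'})$, checks it is well-defined and surjective by an explicit rewriting that stays inside relations (\ref{RelIntro4}) and (\ref{RelIntro5}), and then invokes simplicity of $G_T'$ once more to conclude that $\phi$ is an isomorphism inverse to $\varphi$. In short, simplicity (used twice) replaces the normal-form analysis you were hoping for.

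This also affects your forward direction in part (3). Since no normal-form algorithm is produced, the paper instead observes that $G_T'$ is simple, finitely generated, and (by part (2)) recursively presented when $L(\Omega)$ is recursive, and appeals to Kuznetsov's theorem to get a solvable word problem. Your plan to ``run the normal-form procedure effectively'' is therefore not available.
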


In the proof of Theorem \ref{TheoremMain} we show that each minimal subshift $(\Omega,T)$ over a finite alphabet can be topologically  conjugated to a minimal subshift for which $n = 3$ (see the statement of Theorem \ref{TheoremIntroMain}). The presentation of $G_T'$, in the case $n=3$, in terms of the finite generating set $\{x_{(w,1)} : w\in L_3(\Omega)\}$ is given in Theorem \ref{TheoremTietzeTransformations}. We note that the fact that  the commutator subgroup $G_T'$ is finitely-generated was originally established by Matui in \cite{Matui:2006}. He also showed that the commutator subgroup could not be finitely presented, see also \cite{GrigorchukMedynets} for an alternative proof.

One way to better understand the  relations in Theorem \ref{TheoremIntroMain} is to compare them to those defining the alternating groups (Lemma \ref{LemmaAlternaingGroupRelations}). The proof of the theorem relies on the fact  that the commutator subgroup of $G_T$ can be written as $G_T' = G_1\cdot G_2$, where $G_1$ and $G_2$ are certain subgroups of $G_T'$ isomorphic to increasing unions of products of alternating groups. In the proof of Theorem \ref{TheoremMain} we show that combining the relations for $G_1$ and $G_2$ we obtain a complete set of relations needed to describe $G_T'$.

According to Theorem \ref{TheoremIntroMain}, to construct a TFG with decidable word problem one needs to take a system with recursive language $L(\Omega)$. Such systems can be found amongst  substitution, Toeplitz, and Sturmian systems (see definitions in \cite{DurandHostSkau}, \cite{Downarowicz:2005}, and \cite{Allouce:AutomaticSequences}, respectively).  The following result gives a dynamical reformulation of the isomorphism problem. The proof can be found in \cite{BezuglyiMedynets:2008}, see also \cite{GirdanoPutnamSkau:1999}.

\begin{theorem} Let $(\Omega_1,T_1)$ and $(\Omega_2,T_2)$ be minimal subshifts. Then $G_{T_1}'$ and $G_{T_2}'$ are isomorphic as abstract groups if and only if the dynamical systems are flip-conjugate, i.e., there exists a  homeomorphism $f : \Omega_1\rightarrow \Omega_2$ such that $T_2 = f\circ T_1\circ f^{-1}$ or $T_2 = f\circ T_1^{-1}\circ f^{-1}$.

In particular, the decidability of  isomorphism problem for TFGs is equivalent to the decidability of  flip-conjugacy for minimal subshifts.
\end{theorem}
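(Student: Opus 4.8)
The plan is to prove both directions, the reverse being elementary and the forward one proceeding in two stages: first a spatial realization of the group isomorphism, then a rigidity argument for orbit equivalence of minimal Cantor systems; this is essentially the route of \cite{BezuglyiMedynets:2008} and \cite{GirdanoPutnamSkau:1999}. For the ``if'' direction, if $f\colon\Omega_1\to\Omega_2$ is a homeomorphism with $T_2=fT_1f^{-1}$ or $T_2=fT_1^{-1}f^{-1}$, then, since a subshift and its inverse generate the same topological full group, conjugation by $f$ is an isomorphism $G_{T_1}\to G_{T_2}$ in either case, and any group isomorphism automatically carries $G_{T_1}'$ onto $G_{T_2}'$.

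For the ``only if'' direction, let $\phi\colon G_{T_1}'\to G_{T_2}'$ be an abstract group isomorphism. The first step, which I expect to be the crux of the whole argument, is to show that $\phi$ is \emph{spatially implemented}: there is a homeomorphism $f\colon\Omega_1\to\Omega_2$ with $\phi(g)=fgf^{-1}$ for every $g\in G_{T_1}'$. The mechanism is a reconstruction theorem of Rubin type. One verifies that each $G_{T_i}'$ acts faithfully on the Cantor set $\Omega_i$ and that the action is \emph{locally moving}: for every nonempty clopen $U\subseteq\Omega_i$ there is a nontrivial element of $G_{T_i}'$ supported in $U$. Indeed, by minimality one finds a small clopen $V\subseteq U$ with $V,T_iV,T_i^2V$ pairwise disjoint and contained in $U$; the $3$-cycle that cyclically permutes these three sets through $T_i$ is a commutator of two transpositions along $T_i$-orbits, hence lies in $G_{T_i}'$, and it is supported in $U$. (A similar argument, correcting parity with a transposition on a disjoint clopen pair, shows that $G_{T_i}'$ already maps any point to any other point of its $T_i$-orbit, so the $G_{T_i}'$-orbits are exactly the $T_i$-orbits.) Given these structural facts, the reconstruction machinery produces the homeomorphism $f$; verifying its hypotheses and running it for these particular groups is where the real work of this step lies.

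Once $f$ is in hand the argument becomes dynamical. Since $f$ conjugates the $G_{T_1}'$-action to the $G_{T_2}'$-action and $G_{T_i}'$-orbits coincide with $T_i$-orbits, $f$ maps $T_1$-orbits onto $T_2$-orbits; in particular $fT_1f^{-1}$ preserves each $T_2$-orbit, so we may write $fT_1f^{-1}(z)=T_2^{c(z)}(z)$ for a cocycle $c\colon\Omega_2\to\mathbb Z$. To see that $c$ is continuous, fix $z\in\Omega_2$ and a small clopen $A\ni z$ with $f^{-1}(A),T_1f^{-1}(A),T_1^2f^{-1}(A)$ pairwise disjoint; the $3$-cycle $g\in G_{T_1}'$ on these sets satisfies $fgf^{-1}=\phi(g)\in G_{T_2}'\subseteq G_{T_2}$, and $fgf^{-1}$ agrees with $fT_1f^{-1}$ on $A$, so $c$ coincides on $A$ with the continuous cocycle of $fgf^{-1}$ and is therefore continuous at $z$. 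Hence $c$ is continuous on the compact space $\Omega_2$, thus bounded, i.e. $fT_1f^{-1}\in G_{T_2}$; by the symmetric argument applied to $\phi^{-1}$ (spatially implemented by $f^{-1}$), also $f^{-1}T_2f\in G_{T_1}$. Therefore $(\Omega_1,T_1)$ and $(\Omega_2,T_2)$ are \emph{boundedly orbit equivalent}, and the rigidity theorem for bounded topological orbit equivalence of minimal Cantor systems \cite{BoyleTomiyama} upgrades this to flip-conjugacy. Finally, the ``in particular'' clause follows at once: over the class of minimal subshifts the isomorphism problem for the groups $G_T'$ is, by the equivalence just established, the flip-conjugacy problem for the corresponding subshifts, so either decision problem is solvable exactly when the other is.
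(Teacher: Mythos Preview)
The paper does not prove this theorem at all; it is quoted as a known result, with the proof deferred to \cite{BezuglyiMedynets:2008} (see also \cite{GirdanoPutnamSkau:1999}). Your sketch correctly reproduces the argument of those references: the ``if'' direction is immediate, and the ``only if'' direction combines a spatial-realization step (a Rubin-type reconstruction showing any abstract isomorphism $G_{T_1}'\to G_{T_2}'$ is conjugation by a homeomorphism $f$) with the observation that $f$ is then a bounded orbit equivalence, which the Boyle--Tomiyama rigidity theorem \cite{BoyleTomiyama} upgrades to flip-conjugacy. So there is nothing to compare against in the paper itself, but your outline matches the cited literature and is sound as a high-level plan.
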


 We note that the flip-conjugacy is decidable, for example, in the class of constant length primitive substitutions  \cite{CovenQuasYassawi:2015}.  Fabien~Durand recently announced that they solved the decidability problem for the class of all primitive substitution systems [private communication].

The structure of the paper is the following. In Section \ref{SectionPreliminaries}, we introduce notations, main definitions, and state the necessary results from the theory of dynamical systems.
  In Section \ref{SectionGenerators} we describe a particular set of generators for commutator subgroups of  topological full groups and establish several relations for these generators  (Corollary \ref{CorollarySigmaRelations}).    Section \ref{SectionRelations} is devoted to the proof of Theorem \ref{TheoremIntroMain}.

%
%
%

\section{Preliminaries}\label{SectionPreliminaries}

In this section we introduce  main notations and give necessary definitions from the theory of topological dynamical systems.

\subsection{Subshifts}

Fix  a finite alphabet  $A$. Let  $T : A^\mathbb Z \rightarrow A^\mathbb Z$ be the {\it left  shift} on the space of two-sided sequences over $A$, i.e., $T(\omega)_n = \omega_{n+1}$ for all $n\in  \mathbb Z$  for every $\omega\in A^\mathbb Z$. Let  $\Omega$ be a closed $T$-invariant subset  of $A^\mathbb Z$. The pair $(\Omega, T)$ is called a {\it subshift over a finite alphabet}.  The subshift is called {\it minimal} if every $T$-orbit is dense in $\Omega$. Throughout this paper,  $(\Omega,T)$ will stand for a minimal subshift over a finite alphabet.

\begin{definition}\label{DefinitionFullGroup} Let $(\Omega, T)$ be a  minimal subshift. Denote by $G_T$ the set of homeomorphisms $S :\Omega \rightarrow \Omega$ such that $S(x) = T^{f_S(x)}(x)$ for every $x\in X$, where $f_S: \Omega\rightarrow \mathbb Z$ is a continuous function. We will refer  to $f_S$ as the {\it orbit cocycle of $S$}.  The cocycle identity $f_{S_1S_2}(x) = f_{S_1}(S_2(x))+ f_{S_2}(x)$ shows that the family $G_T$ is a group, termed the {\it topological full group} of the system $(\Omega,T)$. It will be  abbreviated  {\it TFG}.
When $T$ is a minimal subshift over a finite alphabet, the group $G_T$ is countable, its  commutator subgroup $G_T'$ is simple and finitely-generated  \cite{BezuglyiMedynets:2008}  and \cite{Matui:2006}.
All groups appearing in the paper will be assumed to be acting on the left on $\Omega$.
\end{definition}

Denote by  $A^*$ the set of all non-empty words over the alphabet $A$. For a subshift $(\Omega,T)$, denote by  $L_n(\Omega)$ the set of words of length $n$ appearing in sequences of $\Omega$. The set  $L(\Omega) = \bigcup_{n\geq 1}L_n(X)$ is called the {\it language} of the subshift $\Omega$. The length of each word $u\in A^*$ will be denoted by $|u|$.

For any pair of words $u,v\in L(\Omega)$ such that $uw\in L(\Omega)$, denote by $[u.v]$ the set of sequences $\omega\in \Omega$ such that $\omega_{-i} = u_{|u|-i}$ for $i=1,\ldots,|u|$ and $\omega_i = v_i$ for $i=0,\ldots,|v|-1$.  We note that the cylinder sets $[u.v]$, $u,v\in L(\Omega)$, generate the topology of $\Omega$. It will be sometimes convenient to use  $[u|v]$ for $[u.v]$. For $v\in L(\Omega)$ and $i\in \mathbb Z$, set $(v,i) = T^{i}[.v]$, as defined in the introduction.

\medskip \noindent  {\bf Standing Assumption.} {\it Throughout the paper we will assume that no word of length five in $L(\Omega)$ has repeated letters,} i.e. if $w = w_0 \cdots w_4\in L_5(\Omega)$, then $w_i\neq w_j$ whenever $i\neq j$. The systems satisfying this assumption will be referred to as {\it satisfying the condition ($\dag$)}.

 This assumption will lead to a  simpler  description of defining relations in $G_T$, Theorem \ref{TheoremMain}.  The following proposition  shows that any minimal subshift can be made to  satisfy the condition ($\dag$). Furthermore, the transition to the conjugate system will not affect the recursiveness of the language, which will be important in solving word problem  in full groups, Theorem \ref{TheoremWordProblem}.

\begin{definition}
(i) Let $S$ be a finite set. A set $R\subset S^*$ is called {\it recursively enumerable} if there is an algorithm which on input $w\in S^*$ has output ``Yes'' iff $w\in R$. For $w\notin R$ the algorithm may or may not stop. Equivalently, there is an algorithm that enumerates the members of $S$.

(ii) A set $R\subset S^*$ is called {\it recursive} if both $R$ and $S^*\setminus R$ are recursively enumerable. Equivalently, there is an algorithm which on input $w\in S^*$ has output ``Yes'' if $w\in R$ and ``No'' if $w\notin R$.
\end{definition}

\begin{proposition}\label{PropositionConjugateSubshift} Let $(X,T)$ be  a minimal subshift. There exists a minimal subshift $(Y,S)$ topologically conjugate to $(X,T)$ such that  every word $w\in L_5(Y)$ has no repeated letters. Furthermore, $L(X)$ is recursive if and only if $L(Y)$ is recursive.
\end{proposition}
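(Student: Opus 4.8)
The plan is to produce the conjugacy by recoding the subshift over a larger alphabet, using a sliding block code of sufficient window size. Concretely, fix a window length $m$ (to be determined; a first guess is $m = 5$, but we may need $m$ a bit larger). Let $B = L_m(X)$ be the set of admissible words of length $m$, viewed as a new alphabet, and define $\phi : X \to B^{\mathbb Z}$ by $\phi(x)_k = x_k x_{k+1}\cdots x_{k+m-1}$. Then $Y := \phi(X)$ together with the shift $S$ on $B^{\mathbb Z}$ is the \emph{higher block presentation} of $(X,T)$; it is a standard fact that $\phi$ is a homeomorphism intertwining $T$ and $S$, hence $(Y,S)$ is a minimal subshift topologically conjugate to $(X,T)$. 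So the only real content is: \emph{for a suitable choice of $m$, every word of length $5$ in $L(Y)$ has no repeated letters}, i.e.\ any five consecutive length-$m$ blocks read off a point of $X$ are pairwise distinct.

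The key step is to arrange distinctness. A length-$5$ word of $L(Y)$ is determined by a word $u \in L_{m+4}(X)$: its letters are the five successive windows $u_0\cdots u_{m-1},\ u_1\cdots u_m,\ \ldots,\ u_4\cdots u_{m+3}$. Two of these windows, say the $i$-th and the $j$-th with $0 \le i < j \le 4$, coincide iff $u$ is periodic with period $j-i$ on the overlapping range, which forces $u$ to be a prefix of a periodic word of period $p := j-i \le 4$. Since $(X,T)$ is an infinite minimal subshift (it is aperiodic, as its full group is infinite), $X$ contains no $T$-periodic point, so for each $p \in \{1,2,3,4\}$ there is a bound $N_p$ such that no word of $L(X)$ of length $> N_p$ is $p$-periodic; equivalently, there is a length beyond which $X$ is "$p$-power-free" in the relevant sense. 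Take $m$ larger than $\max_p N_p + 4$ (any such $m$ works, and in fact $m$ can be taken to be this explicit bound). Then no $u \in L_{m+4}(X)$ can have two coinciding windows among its first five, so $L_5(Y)$ consists of words with distinct letters, as required. I would spell out the periodicity/overlap computation carefully, since that is where an off-by-one error would creep in, but it is elementary: an overlap of length $m - p \ge m - 4 > N_p$ between a block and its $p$-shift would exhibit a $p$-periodic subword of $X$ longer than $N_p$.

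The remaining claim is the equivalence of recursiveness of $L(X)$ and $L(Y)$, and here I would observe that the recoding is completely effective. Given the alphabet $B = L_m(X)$ (which is finite and, if $L(X)$ is recursive, computable, since membership in $L_m(X)$ is decided by $L(X)$), a word $W = W_0 \cdots W_{r-1} \in B^*$ lies in $L(Y)$ iff the $W_j$ "overlap consistently" — i.e.\ the last $m-1$ letters of $W_j$ equal the first $m-1$ letters of $W_{j+1}$ for all $j$ — and the concatenated word $W_0 W_1[m-1] W_2[m-1]\cdots \in A^*$ of length $m + r - 1$ lies in $L(X)$. Thus membership in $L(Y)$ reduces to finitely many membership queries in $L(X)$, giving a decision procedure for $L(Y)$ from one for $L(X)$. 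Conversely, $v \in L(X)$ iff $v$ extends to some $v' \in L_{\max(|v|,m)}(X)$ (automatic by minimality/$X$ nonempty, but more simply one checks directly) which appears as a letter of $Y$, and appearing as a letter of $Y$ is decidable from a decision procedure for $L(Y)$; so recursiveness transfers back. The main obstacle is not difficulty but bookkeeping: pinning down the exact window size $m$ in terms of the aperiodicity bounds $N_1,\dots,N_4$ and checking the overlap indices, together with writing the two reductions between $L(X)$ and $L(Y)$ precisely enough that "effectively computable" in Theorem \ref{TheoremIntroMain}(2) is justified.
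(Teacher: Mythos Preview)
Your approach is correct and essentially identical to the paper's: both pass to the higher block presentation of $X$ with a window length $m$ chosen so that no two among any five consecutive $m$-blocks can coincide, and both transfer recursiveness by the obvious effective translation between $L(X)$ and $L(Y)$. The only cosmetic difference is that the paper phrases the choice of $m$ as the least $n_0$ with $T^i[.w]\cap[.w]=\emptyset$ for all $w\in L_{n_0}(X)$ and $1\le i\le 4$ (which is exactly your ``no $p$-periodic word of this length for $p\le 4$'' condition), and handles the backward recursiveness direction slightly more cleanly by encoding words of length $\ge n_0$ directly into $L(Y)$ and testing shorter words against $L_{n_0}(X)$.
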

\begin{proof} Choose the least $n_0\geq 1$ such that
\begin{equation}\label{EqDisjointSets}T^i[.w]\cap [.w] = \emptyset\mbox{ for }i=1,\ldots,4,\;w\in L_{n_0}(X).\end{equation}
Such a number $n_0$ can be  found effectively by inductively testing words in $L_n(X)$ for increasing $n\geq 1$. The minimality of $(X,T)$ ensures that such an $n_0$ exists. We will treat the words in $L_{n_0}(X)$ as a new alphabet $B$. Define $\pi : X\rightarrow B^\mathbb Z$ as follows

$$\pi(\{x_n\}) = \cdots(x_{-n_0}x_{-n_0+1}\cdots x_{-1}).(x_0x_1\cdots x_{n_0-1})(x_1x_2\cdots x_{n_0})\cdots .$$  Set $Y = \pi(X)$. Denote by $S$  the left shift on $Y$. It is straightforward to check that $(X,T)$ and $(Y,S)$ are topologically conjugate. So $(Y,S)$ is a minimal subshift. Equation (\ref{EqDisjointSets}) guarantees us that $(Y,S)$ satisfies the desired property.

If $L(X)$ is recursive, taking the $\pi$-preimages, we can decide whether $w\in B^*$ lies in $L(Y)$.  Conversely, suppose $L(Y)$ is recursive. Any word $w=w_0\ldots w_{n-1}\in A^n$, $n\geq n_0$, can be uniquely represented as $$(w_0w_1\cdots w_{n_0-1})(w_1w_2\cdots w_{n_0})\cdots(w_{n-n_0}w_{n-n_0+1}\cdots w_{n-1}).$$ Therefore, $w\in L(X)$ iff the representation  of $w$ lies in $L(Y)$. If $w\in A^n$, $n<n_0$, consider all words in $A^{n_0}$ containing $w$ as a subword. Then $w\in L(X)$ iff $w$ is a subword of a word from $L_{n_0}(X)$.
 \end{proof}

\subsection{Kakutani-Rokhlin partitions}\label{SectionKR_Partitions}

In this section we explain the concept of Kakutani-Rokhlin partitions of minimal systems.

\begin{definition}  A family $\xi$ of disjoint clopen sets of the form $$\xi =\{B,TB,\ldots, T^{n-1}B\}$$  is called a {\it $T$-tower with  base $B$ and height $n$}. A clopen partition of $\Omega$ of the form $$\Xi = \{T^iB_v : 0\leq i\leq h_v-1, \; v=1,\ldots, q\}$$ is called a {\it Kakutani-Rokhlin partition}.  The sets $\{T^iB_v\} $
 are called {\it atoms} of the partition $\Xi$.
\end{definition}

Fix an arbitrary clopen set $B\subset \Omega$. Define a function $t_B : B \rightarrow \mathbb N$ by setting $$t_B( \omega) = \min \{ k \geq 1 : T^k \omega\in B\}.$$ Using the minimality of $T$, one can show  that the function $t_B$ is well-defined, bounded, and  continuous. Denote by $K$ the set of all integers $k\in \mathbb N$ such that the set $B_k = \{\omega\in B : t_B(\omega) = k\}$ is non-empty.    It follows from the continuity of $t_B$ that the set $K$ is finite and $B= \bigsqcup _{k\in K} B_k$ is a clopen partition. Using the definition of  $t_B$, one can check that the family  $$ \{T^i B_k : 0\leq i\leq k-1,\;k\in K\}$$ consists of disjoint sets and that
$$\Omega = \bigsqcup _{k\in K}\bigsqcup_{i = 0}^{k-1}T^iB_k.$$ Thus, $\Xi$  is a Kakutani-Rokhlin partition of $\Omega$.
\begin{definition}

The union of the sets $B(\Xi) = \bigsqcup_{k\in K}B_k$ is called the {\it base of the partition $\Xi$} and the union of the top levels $H(\Xi) = \bigsqcup_{k\in K} T^{k-1}B_k$ is called the {\it top or roof of the partition}. \end{definition}

 In the case of symbolic systems, Kakutani-Rokhlin partitions can be given purely combinatorial interpretation through the concept of return words.   The technique of return words  was developed by F.~Durand  and his coauthors  \cite{DurandHostSkau,Durand:1998}. The return words play  the same  role as the sets $B_k$ in the definition of the first-return map above.

\begin{definition}\label{DefinitionReturnWords} Let $u,v \in L(\Omega)$ be such that $uv\in L(\Omega)$. A word $w\in L(\Omega)$ is called a {\it return word} to $u.v$ if it satisfies the following properties:

\begin{itemize}
\item[(i)] $uwv\in L(\Omega)$;

\item[(ii)]  $v$ is a prefix of $wv$ and $u$ is a suffix of $uw$;

\item[(iii)] the word $uwv$ contains only two occurrences of $uv$.
 \end{itemize}

In other words, we find two consecutive occurrences of the pair $u.v$ in a sequence from $\Omega$, then  the word spanning the first occurrence of $v$ to the second occurrence of $u$ is called a return word. The following figure illustrates this concept. Note that, in general, the two occurrences of $uv$ can {\it overlap}.

\begin{figure}[hb]
\centering
\includegraphics[width = 5cm ]{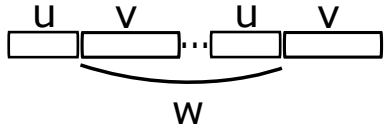}
\caption{Return word to $u.v$.}
\end{figure}

The minimality of the system $(\Omega,T)$ implies that every word appears in every sequence of $\Omega$ with bounded gaps, see, for example,  \cite{Durand:1998}. Thus, the set of return words $R_{u.v}$ to $u.v$ is  non-empty and finite.
  \end{definition}

 Fix an arbitrary sequence $ \omega \in \Omega$. Denote by $R_n$ the set of return words to $\omega[-n,-1].\omega[0,n]$ .
Set $u =  \omega [-n,-1]$ and $v =  \omega[0,n]$. By definition of return words, the cylinder sets $[u.rv]$, $r\in R_n$, are contained in $[u.v]$. Furthermore, $\{[u.rv] : r\in R_n\}$ form a clopen partition of $[u.v]$, which, in particular, implies that $R_n$ is finite for all $n$. Notice that if $y\in [u.rv]$, $r\in R_n$, then the smallest $k>0$ (termed the {\it first return time}) such that   of $T^{k}y\in [u.v]$ is $|r|$, by the property (iii) of return words. This implies that $$\Xi_n = \{T^i[u.rv] : r\in R_n,\; i=0,\ldots,|r|-1\}$$ is a clopen Kakutani-Rokhlin partition of $\Omega$ with base $[u.v]$.

We note that  for every $r\in R_{n+1}$ there is a unique decomposition $r = r_1\cdots r_k$ with $r_i\in R_n$, see the proof in  \cite[Proposition 2.6]{Durand:1998} and \cite[Corollary 18]{DurandHostSkau}. This implies that the partition $ \Xi_{n+1}$ refines $ \Xi_n$ and that the family $\{\Xi_n\}_{n\geq 1}$ generates the topology of $\Omega$. Since  $[u_n.v_n]$ is the base $()$ of the partition  $\Xi_n$, denoted by $B(\Xi_n)$,  we get that $\bigcap_{n\geq 1} B(\Xi_n) = \{\omega\}$. We summarize the properties of the KR partitions $\{\Xi_n\}$ in the following result.

\begin{proposition}\label{PropositionExistKRPartitions} Let $(\Omega,T)$ be a minimal subshift and $ \omega \in \Omega$ be an arbitrary point. Let $R_n$ be the set of return words to  $ \omega[-n,-1]. \omega[0,n]$. Then for every $n\geq 1$ the family $\Xi_n = \{T^i[\omega[-n,-1].r \omega[0,n]] : r\in R_n,\; i=0,\ldots,|r|-1\}$ is a clopen Kakutani-Rokhlin partition of $\Omega$. Furthermore,

\begin{enumerate}

\item the partitions $\{\Xi_n\}_{n\geq 1}$ generate the topology of $\Omega$;

\item the partition $\Xi_{n+1}$ refines $\Xi_n$ for every $n\geq 1$;

\item $\bigcap_{n\geq 1} B(\Xi_n) = \{\omega\}$ and $B(\Xi_{n+1})\subset B(\Xi_n)$ for every $n\geq 1$;

\item the height of the shortest tower in $\Xi_n$ goes to infinity as $n\to \infty$;

\item $\sup_{-n\leq i \leq n}\textrm{diam}(T^i(B(\Xi_n))) \to 0$ as $n\to \infty$.

\end{enumerate}
\end{proposition}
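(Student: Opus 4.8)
The plan is to observe that the first assertion and properties (1)--(3) are, modulo one citation, already assembled in the discussion preceding the statement, and then to prove (4) and (5), the two genuinely new facts. Write $u_n=\omega[-n,-1]$ and $v_n=\omega[0,n]$, so that $B(\Xi_n)=[u_n.v_n]$. That each $\Xi_n$ is a clopen Kakutani--Rokhlin partition with base $[u_n.v_n]$ is exactly what was verified above, using property~(iii) of return words to see that the first return time from $[u_n.r v_n]$ to $[u_n.v_n]$ is $|r|$. Property~(2) and the inclusions $B(\Xi_{n+1})\subset B(\Xi_n)$ of~(3) come from the unique factorisation of each $r\in R_{n+1}$ as $r=r_1\cdots r_k$ with $r_i\in R_n$, quoted from \cite[Proposition 2.6]{Durand:1998} and \cite[Corollary 18]{DurandHostSkau}; and $\bigcap_{n\ge1}B(\Xi_n)=\bigcap_{n\ge1}[u_n.v_n]=\{\omega\}$ since these are the cylinders of growing radius about $\omega$.

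For (5) I would establish the sharper fact that \emph{every atom of $\Xi_n$ is contained in a single cylinder prescribing the coordinates on the whole window $[-n,n]$}. Since the partitions of $\Omega$ into cylinders with prescribed coordinates on $[-n,n]$ refine one another as $n$ grows and generate the topology, this forces $\max\{\textrm{diam}(a):a\in\Xi_n\}\to0$ as $n\to\infty$, which is property~(5). The sharper fact itself is an arithmetic observation: the base atom $[u_n.r v_n]$ prescribes the coordinates on $[-n,\,|r|+n]$, hence the atom $T^i[u_n.r v_n]$ prescribes them on $[-n-i,\,|r|+n-i]$, and since $0\le i\le|r|-1$ this window contains $[-n,n]$. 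The same fact also yields~(1): every clopen set is a finite union of cylinders, so for each of its points one can fix a central cylinder about that point lying inside the set, and then for all large $n$ the atom of $\Xi_n$ through the point lies inside that cylinder. It is worth stressing that here one uses the precise shape of the return-word partition --- the base atom over a tower of height $|r|$ already prescribes a window of length exceeding $|r|$ to the right --- and not merely that $\Xi_n$ is a first-return partition.

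For (4), write $h_n=\min_{r\in R_n}|r|$ for the height of the shortest tower of $\Xi_n$; I claim $h_n\to\infty$, and would argue by contradiction. If $h_n$ does not tend to infinity, some value $h$ is attained by $h_n$ for infinitely many $n$; for each such $n$ choose $r_n\in R_n$ with $|r_n|=h$. By clause~(i) of the definition of a return word, $u_n r_n v_n\in L(\Omega)$, so there is $y_n\in\Omega$ with $y_n[-n,\,h+n]=u_n r_n v_n$; clause~(ii) (the prefix/suffix condition) then gives $y_n[-n,n]=\omega[-n,n]$ and $(T^{h}y_n)[-n,n]=\omega[-n,n]$. Passing to a subsequence with $y_n\to y$, the first identity forces $y=\omega$, and the second, by continuity of $T^{h}$, forces $T^{h}\omega=\omega$; this contradicts the aperiodicity of the minimal subshift $(\Omega,T)$ (which has no periodic points, $\Omega$ being infinite). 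Hence $h_n\to\infty$.

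The main obstacle is property~(5): it has to be read off from the return-word conditions, the point being that all $|r|$ translates defining a tower still prescribe $[-n,n]$ because the base atom looks ahead by at least the tower height. Granting that, (1) is immediate and (2), (3) were dealt with above, while (4) is the short compactness argument just given.
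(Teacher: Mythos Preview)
The paper does not actually prove this proposition: it is presented as a summary of the preceding paragraph, which verifies that each $\Xi_n$ is a Kakutani--Rokhlin partition, derives (2) from the return-word factorisation cited from \cite{Durand:1998,DurandHostSkau}, notes (3), and asserts (1) without further argument; (4) and (5) receive no justification at all. Your write-up is therefore strictly more detailed than the paper's. The coordinate-window computation showing that each atom $T^i[u_n.rv_n]$ prescribes the block $[-n,n]$ is exactly what is needed to make (1) rigorous (refinement alone is not enough), and your compactness argument for (4) is correct. One small simplification: since each $r'\in R_{n+1}$ factors through $R_n$, the minimal height $h_n$ is nondecreasing, so if it fails to diverge it is eventually constant and you can dispense with the subsequence altogether.

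There is, however, a mismatch at (5). You prove $\max\{\textrm{diam}(a):a\in\Xi_n\}\to 0$ and call this ``property~(5)'', but (5) as written concerns $T^i(B(\Xi_n))=T^i[u_n.v_n]$ for $-n\le i\le n$, and these sets are not atoms of $\Xi_n$. In the standard subshift metric, $T^{n}[u_n.v_n]$ only prescribes the window $[-2n,0]$, so two points in it may already differ at coordinate $1$ and its diameter does not tend to $0$; the literal statement of (5) thus appears slightly off. What your window argument \emph{does} give is the version with $i$ ranging over any fixed bounded interval (the prescribed window then still contains $[-(n-K),\,n-K]$), and that bounded-range version is precisely what is invoked later in the proof of Theorem~\ref{TheoremMain}. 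So your argument is the correct one; only the identification ``which is property~(5)'' should be rephrased.
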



\subsection{Permutations}\label{SectionPermutations}
In this section we define a special kind of elements of $G_T$ termed  {\it permutations}.  Simplest examples of permutations arise from the canonical embeddings of symmetric groups  into $G_T$ defined as follows. Consider a clopen set $U$ such that the sets $\{U,TU,\ldots,T^{n-1}U\}$ are mutually disjoint. Consider elements $s_i\in G_T$ such that $s(x) = T(x)$ for $x\in T^iU$, $s(x) = T^{-1}(x)$ for $x\in T^{i+1}U$, and $s(x) = x$ elsewhere. Then the elements $<s_0,\ldots,s_{n-2}>$ generate the symmetric group $\textrm{Sym}(n)$.

  The KR partitions provide a natural framework for embedding direct products of copies of symmetric groups into the full group. Using sequences of partitions as in Proposition \ref{PropositionExistKRPartitions}, we will be able  to embed inductive limits of direct symmetric groups.

Let  $\{\Xi_n\}_{n\geq 1}$ be a sequence of KR partitions as in Proposition \ref{PropositionExistKRPartitions}.  Assume that for each $\Xi_n$ the minimum height of its towers is at least three. Set $u_n = \omega[-n,-1]$ and $v_n = \omega[0,n]$. Then for every $n\geq 1$, we have that $$\Xi_n = \{T^i[u_n.r v_n] : r\in R_n,\; i=0,\ldots,|r|-1\}.$$

\begin{definition}\label{DefinitionPermutation} Fix an integer $n\geq 1$.   We say that a homeomorphism $P\in G_T$ is an {\it permutation associated with the KR partition $\Xi_n$} if (1) its orbit cocycle $f_P$ is compatible (constant on atoms) with the partition $\Xi_n$ and (2) for any point $x\in T^i [u_n.r v_n]$ ($0\leq i\leq |r|-1$, $r\in R_n$) we have that $0\leq f_P(x) + i |r|-1$. The latter condition means that $P$ permutes atoms only within each tower without moving points over the top or the base of the tower. We will  call $P$ just a {\it permutation} when the partition $\Xi_n$ is clear from the context.

Note that each permutation $P$ can be uniquely factored into a product of permutations $P_1,\ldots,P_{|R_n|}$ such that $P_r$ acts only within the tower $$\xi_r^{(n)} = \{T^i[u_n.r v_n] : i=0,\ldots,|r|-1\},\; r\in R_n.$$
\end{definition}

The following figure illustrates a KR-partition consisting of three $T$-towers.  The leftmost ellipses represent the bases of the $T$-towers. The arrows give the directions in which points are headed under the action of $T$. Note that the points from the top of the towers (the rightmost ellipses) are sent back to one of the bases (it is not necessarily the base it originally came from).

\begin{figure}[ht]\label{FigurePermutations}
\centering
\includegraphics[width = 5cm ]{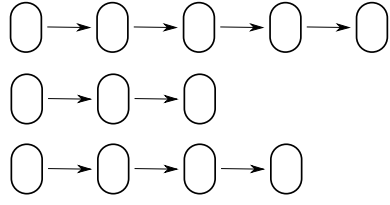}
\caption{A KR-partition with three $T$-towers}
\end{figure}

 Denote by $G_{T,\omega}^{(n)}$ the group of permutations built upon $\Xi_n$. It has a structure of the direct product of symmetric groups $\prod_{r\in R_n}\mathrm{Sym}(|r|)$. For example, for the KR-partition shown in the figure the group $G_{T,\omega}^{(n)}$  is isomorphic to $\mathrm{Sym}(5)\times \mathrm{Sym}(3) \times \mathrm{Sym}(4)$.

 Since $\Xi_{n+1}$ refines $\Xi_n$, we get that $G_{T,\omega}^{(n)} \subset G_{T,\omega}^{(n+1)}$. Denote by $G_{T,\omega} = \bigcup_{n\geq 1} G_{T,\omega}^{(n)}$ the group of all permutations associated with the sequence of KR partitions $\{\Xi_n\}_{n\geq 1}$.  For $r\in R_n$ and $0\leq i \leq |r|-1$, denote by $(u_n.rv_n,i)$ the set $T^i[u_n.rv_n]$. Denote by $\sigma_{(u_n.rv_n,i)}$ the homeomorphism  such that $$\sigma_{(u_n.rv_n,i)}(x) = Tx\mbox{ for }x\in (u_n.rv_n,i)\cup (u_n.rv_n,i+1)$$
 and $$\sigma_{(u_n.rv_n,i)}(x) = T^{-2}x\mbox{ for }x\in (u_n.rv_n,i+2).$$
 See Section \ref{SectionGenerators} for a detailed discussion of properties of  $\sigma_{(u_n.rv_n,i)}$.

\begin{proposition}\label{PropositionGroupPermutations} Let $\{\Xi_n\}_{n\geq 1}$ be a sequence of Kakutani-Rokhlin partitions corresponding to $\omega \in \Omega$ as in Proposition \ref{PropositionExistKRPartitions}.

(i) Then the group $G_{T,\omega}$  coincides with the group of elements in $G_T$ preserving the positive half-orbit of $\omega$ under $T$.

(ii) The commutator subgroup $G_{T,\omega}'$ is simple.

(iii) The group $G_{T,\omega}'$ is the increasing union of subgroups $\left(G_{T,\omega}^{(n)}\right)'$, $n\geq 1$.

(iv)  The commutator subgroup  $\left(G_{T,\omega}^{(n)}\right)' $ is  generated by
 $\{\sigma_{(u_n.rv_n,i)}\}$, $r\in R_n$, $i=0,\ldots,|r|-3$, and isomorphic to  the group
 $\prod_{r\in R_n}\mathrm{Alt}(|r|)$.

(v) For each $n\geq 1$,  the embedding   $\left(G_{T,\omega}^{(n)}\right)' \subset \left(G_{T,\omega}^{(n+1)}\right)'$  is given by the rule: for $r\in R_n$ and $0 \leq i \leq |r|-3$,
\begin{equation*}
 \sigma_{(u_n.rv_n, i)} = \prod_{r'\in R_{n+1}}\prod_{\substack{ 0\leq j\leq |r'|-3 \\ (u_{n+1}.r'v_{n+1}, j)  \subset (u_n.rv_n, i)}}\sigma_{(u_{n+1}.r'v_{n+1}, j)}.
 \end{equation*}

\end{proposition}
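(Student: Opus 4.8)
The plan is to treat the five assertions essentially in the stated order, since each rests on the previous ones, and to reduce everything to the concrete combinatorics of the Kakutani–Rokhlin partitions $\Xi_n$ supplied by Proposition~\ref{PropositionExistKRPartitions}. For part~(i), I would argue by double inclusion. Each generating permutation of $G_{T,\omega}^{(n)}$ moves a point only within its own tower $\xi_r^{(n)}$, never across the roof $H(\Xi_n)$ or the base $B(\Xi_n)$; since the positive half-orbit $\{T^k\omega : k\geq 0\}$ enters the base $[u_n.v_n]$ only at the single point $\omega$ (it is the first point of the orbit lying in that cylinder once $n$ is large, by the choice of $\omega$ and minimality), a permutation built on $\Xi_n$ fixes $\omega$ and permutes the remaining half-orbit points within each tower, hence preserves the positive half-orbit setwise; taking the union over $n$ gives $G_{T,\omega}\subseteq\{S\in G_T : S(\{T^k\omega:k\ge0\})=\{T^k\omega:k\ge0\}\}$. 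Conversely, given $S\in G_T$ preserving the positive half-orbit, its orbit cocycle $f_S$ is continuous, hence locally constant, so for $n$ large enough $f_S$ is constant on the atoms of $\Xi_n$; the constraint that $S$ not move any $T^k\omega$ out of $\{T^j\omega:j\ge0\}$ forces $S$ to respect the base of each tower, and enlarging $n$ further (using clause~(4) of Proposition~\ref{PropositionExistKRPartitions}, that tower heights tend to infinity) lets us also avoid the roof; thus $S\in G_{T,\omega}^{(m)}$ for some $m$.

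Parts~(ii)--(iv) are then structural consequences of the identification $G_{T,\omega}^{(n)}\cong\prod_{r\in R_n}\mathrm{Sym}(|r|)$ already recorded before the proposition. Since each $\Xi_{n+1}$ refines $\Xi_n$, decomposing a return word $r\in R_n$ as $r=r_1\cdots r_k$ with $r_i\in R_{n+1}$ (the unique decomposition cited from \cite{Durand:1998,DurandHostSkau}) realizes $\mathrm{Sym}(|r|)$ diagonally inside $\prod_i\mathrm{Sym}(|r_i|)$ in the obvious block fashion, and the standing assumption that tower heights are $\geq 3$ guarantees $|r|\geq 3$ for all relevant $r$, so $\mathrm{Alt}(|r|)$ is simple and nonabelian. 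Hence $\bigl(G_{T,\omega}^{(n)}\bigr)'=\prod_{r\in R_n}\mathrm{Alt}(|r|)$ for each $n$, and the chain $\bigl(G_{T,\omega}^{(n)}\bigr)'\subset\bigl(G_{T,\omega}^{(n+1)}\bigr)'$ exhibits $G_{T,\omega}'$ as an increasing union of these, giving~(iii). For~(ii), simplicity: an increasing union of finite products of nonabelian simple groups is simple provided the embeddings are \emph{diagonal} (no factor is killed and no factor maps into a single factor as an isomorphism with trivial complement being absorbed) — more precisely one checks that a nontrivial normal subgroup $N$ of the union meets some $\bigl(G_{T,\omega}^{(n)}\bigr)'$ nontrivially, hence contains a whole factor $\mathrm{Alt}(|r|)$, and then the refinement rule spreads it across all factors at every later level; this is the one place requiring a little care. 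For~(iv), that $\bigl(G_{T,\omega}^{(n)}\bigr)'$ is generated by the $\sigma_{(u_n.rv_n,i)}$ with $0\le i\le|r|-3$ is exactly the classical fact that $\mathrm{Alt}(m)$ is generated by the $3$-cycles $(i,i+1,i+2)$; one just notes that $\sigma_{(u_n.rv_n,i)}$ acts on the atoms $T^0[u_n.rv_n],\dots,T^{|r|-1}[u_n.rv_n]$ of the tower $\xi_r^{(n)}$ precisely as this $3$-cycle (it sends level $i$ to $i+1$, level $i+1$ to $i+2$, level $i+2$ back to $i$, via $T$, $T$, $T^{-2}$ respectively), and is trivial on all other towers.

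Part~(v) is a direct computation: fix $r\in R_n$ and $0\le i\le|r|-3$, write the unique decomposition of $r$ into $R_{n+1}$-words, which partitions the tower $\xi_r^{(n)}$ into consecutive sub-blocks indexed by the sub-towers $\xi_{r'}^{(n+1)}$; under this identification the atom $(u_n.rv_n,i)=T^i[u_n.rv_n]$ is a disjoint union of atoms $(u_{n+1}.r'v_{n+1},j)=T^j[u_{n+1}.r'v_{n+1}]$ over exactly those pairs $(r',j)$ with $T^j[u_{n+1}.r'v_{n+1}]\subset T^i[u_n.rv_n]$, and the $3$-cycle on levels $i,i+1,i+2$ of the coarse tower becomes, in the refined picture, the product over the refined pieces of the corresponding level-shifting maps — which are exactly the generators $\sigma_{(u_{n+1}.r'v_{n+1},j)}$ with $0\le j\le|r'|-3$ lying underneath $(u_n.rv_n,i)$, all of which commute since they act on disjoint refined atoms. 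This yields the displayed formula. The main obstacle I anticipate is the bookkeeping in~(ii): ensuring that a nontrivial normal subgroup, once it captures a single alternating factor at level $n$, is forced by the refinement embeddings to contain every factor at all higher levels, so that it exhausts the union — this requires knowing that every $\mathrm{Alt}(|r|)$-factor at level $n+1$ receives, under the embedding, the image of \emph{some} generator coming from level $n$ (equivalently, every sub-tower at level $n+1$ sits under some coarse atom that is genuinely moved), which follows from the heights being $\geq 3$ at every level so that every return word has length $\ge 3$ and contributes a nontrivial $3$-cycle.
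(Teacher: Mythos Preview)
The paper's own proof is essentially a set of citations: (i) is deferred to \cite{GirdanoPutnamSkau:1999} and \cite{GrigorchukMedynets}, (ii) to Matui \cite[Lemma 3.4]{Matui:2006}, and (iii)--(v) are declared to ``follow directly from the construction of the KR-partitions.'' Your proposal therefore does considerably more than the paper, and your arguments for (iii), (iv), (v) are exactly the computations the paper leaves implicit. In (i) there is a harmless misstatement: the positive half-orbit of $\omega$ does \emph{not} enter the base $[u_n.v_n]$ only at $\omega$ --- it returns infinitely often --- but what you need (and what the argument actually uses) is that $\omega$ lies in the base, so the positive half-orbit is a concatenation of complete tower traversals, each of which is preserved by any $\Xi_n$-permutation.

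The genuine gap is in (ii). Your spreading step asserts that once $N$ contains a single factor $\mathrm{Alt}(|r_0|)$ at level $n$, it is forced to contain all factors at later levels, and you justify this by ``every $\mathrm{Alt}(|r'|)$-factor at level $n+1$ receives the image of \emph{some} generator coming from level $n$.'' That condition is not the relevant one: normality of $N$ only gives you, at level $n+1$, the factors $\mathrm{Alt}(|r'|)$ for those $r'$ whose $R_n$-decomposition actually contains $r_0$, and in general not every $r'\in R_{n+1}$ need contain $r_0$. What is required is the \emph{simplicity of the Bratteli diagram} coming from minimality: for any clopen set $U$ (here $U=[u_n.r_0 v_n]$) there is a bound $K$ such that every orbit segment of length $K$ meets $U$; hence once tower heights at level $n+k$ exceed $K$, every $r''\in R_{n+k}$ has $r_0$ in its $R_n$-decomposition, and then the normality argument goes through. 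The height $\geq 3$ hypothesis you invoke is needed only to ensure each $\mathrm{Alt}(|r|)$ is nonabelian simple; it does not by itself provide the connectivity that makes the spreading work.
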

\begin{proof}
The first statement was established in \cite{GirdanoPutnamSkau:1999}, see also  the discussion before Proposition 5.2 in \cite{GrigorchukMedynets}. The proof of the second statement is due to Matui  \cite[Lemma 3.4]{Matui:2006}. The remaining conclusions of the proposition   follow  directly from the construction of the KR-partitions $\{\Xi_n\}$.
\end{proof}

The following result shows that permutations from $G_{T,\omega}$ almost completely determine the structure of TFGs. The proof can be found in \cite[Theorem 5.4]{GrigorchukMedynets}, see also \cite[Lemma 4.1]{Matui:2006}.

\begin{theorem}\label{TheoremPermutationProduct} Let $(\Omega,T)$ be a Cantor minimal system. Let $\omega,\omega'\in \omega$ be from distinct $T$-orbits. Then $G_T'= G_{T,\omega}'\cdot G_{T,\omega'}'$.
\end{theorem}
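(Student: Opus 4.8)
The plan is to realize the two factors $G_{T,\omega}'$ and $G_{T,\omega'}'$ as the commutator subgroups of the stabilizers of the positive half-orbits of $\omega$ and $\omega'$, and then to show that every element of $G_T'$ factors as a product of two such elements by an orbit-breaking argument on the (finite) portion of an orbit where a given $S \in G_T'$ acts nontrivially. First I would recall, via Proposition \ref{PropositionGroupPermutations}(i), that $G_{T,\omega}$ is exactly the subgroup of $S \in G_T$ fixing the forward orbit $\{\omega, T\omega, T^2\omega, \ldots\}$ setwise and preserving its linear order, equivalently the $S$ whose orbit cocycle never pushes a point of that half-orbit past $\omega$ itself. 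Since $\omega$ and $\omega'$ lie in distinct $T$-orbits, the two half-orbits are disjoint, so the corresponding local structures interact freely.

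Next I would unwind what $G_T'$ looks like concretely: by the standard description of topological full groups of minimal Cantor systems, any $S \in G_T$ is determined by a clopen partition on which the orbit cocycle $f_S$ is constant, so $S$ is a finite ``cut and permute'' of a tower-like decomposition; and $G_T'$ is generated (indeed, by results of Matui and the discussion in \cite{GrigorchukMedynets}) by the three-cycle-type elements $\sigma$ supported on towers of height $\ge 3$, each of which moves only finitely many atoms within a single tower. The key reduction is therefore: given such a generator (or a product of finitely many of them), I want to write it as a product of one element fixing the forward orbit of $\omega$ and one fixing the forward orbit of $\omega'$. The mechanism is to refine the partition enough (using Proposition \ref{PropositionExistKRPartitions}, whose partitions $\Xi_n$ have $\bigcap_n B(\Xi_n) = \{\omega\}$, and an analogous sequence $\Xi_n'$ centered at $\omega'$) that the support of the given element sits inside a union of towers, and within each such tower one can choose the ``base'' so that the atom containing $\omega$ (respectively $\omega'$) is at the bottom of the tower; then a three-cycle supported strictly above that base automatically fixes the half-orbit through it. A single three-cycle on three consecutive atoms of a tall enough tower can be expressed as a composition of three-cycles each of which avoids, alternately, the $\omega$-atom and the $\omega'$-atom of that tower, using the standard identity that $\mathrm{Alt}(m)$ for $m \ge 5$ is generated by the $3$-cycles on $\{1,2,3,4,5\}$ together with those on $\{3,4,\ldots,m\}$, i.e. by two copies of smaller alternating groups whose supports omit, respectively, the top two and the bottom two coordinates. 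Concretely I expect to verify the theorem first for $G_{T,\omega}^{(n)}{}'$-type generators in a fixed tower and then pass to the union over $n$ and over towers, invoking simplicity of $G_T'$ only to know the group generated this way is all of $G_T'$ (alternatively, invoke that $G_T'$ is generated by these $\sigma$'s directly).

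The main obstacle, and the step I would spend the most care on, is the ``alignment'' point: for a given finitely-supported $S \in G_T'$ I must choose the refining Kakutani--Rokhlin partition so that simultaneously (a) the support of $S$ is a union of atoms, (b) the orbit segment of $\omega$ meeting the support lies at the base of the relevant tower(s), and (c) likewise for $\omega'$, with the two never colliding inside the same atom. Property (a) is routine continuity of $f_S$; the subtlety is that moving the base of a tower to put $\omega$'s atom at the bottom may disturb where $\omega'$'s atom sits, so one genuinely uses that $\omega,\omega'$ are in different orbits to arrange that, after passing to a common refinement of $\Xi_n$ and $\Xi'_m$ for large $n,m$, the atom through $\omega$ and the atom through $\omega'$ are distinct atoms, far apart in the tower, and then the decomposition $S = S_\omega S_{\omega'}$ is built tower by tower by the $\mathrm{Alt}$-generation identity above, with $S_\omega$ supported away from $\omega$'s atoms and $S_{\omega'}$ supported away from $\omega'$'s atoms. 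Once that bookkeeping is in place, $S_\omega \in G_{T,\omega}'$ and $S_{\omega'} \in G_{T,\omega'}'$ by Proposition \ref{PropositionGroupPermutations}(i)--(iv), and the reverse inclusion $G_{T,\omega}' \cdot G_{T,\omega'}' \subseteq G_T'$ is immediate since both factors lie in $G_T'$.
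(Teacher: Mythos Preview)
The paper does not actually prove Theorem \ref{TheoremPermutationProduct} here; it is quoted from \cite[Theorem 5.4]{GrigorchukMedynets} (see also \cite[Lemma 4.1]{Matui:2006}). However, the decomposition is carried out explicitly inside the proof of the final lemma in the proof of Theorem \ref{TheoremMain}, so that is the argument your proposal should be compared against.

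Your proposal has a genuine gap at exactly the point you flag as the ``main obstacle''. You implicitly assume that, after refining, a given $S\in G_T'$ (or each of its $\sigma$-generators) becomes a \emph{within-tower} permutation, so that one can then run an $\mathrm{Alt}$-decomposition tower by tower. This fails: for any Kakutani--Rokhlin partition $\Xi_n$ centered at $\omega$, the element $\sigma_{(w,i)}$ will in general carry atoms at the top of a tower over the roof into the base of (possibly different) towers. Being a within-tower permutation for $\Xi_n$ is precisely the condition $S\in G_{T,\omega}^{(n)}$, and a generic $S\in G_T'$ does not satisfy it for any $n$. Passing to a ``common refinement of $\Xi_n$ and $\Xi'_m$'' does not help, since the common refinement of two KR partitions is not a KR partition, so there are no towers in which to run your $\mathrm{Alt}$ identity.

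The argument that actually works (and is what the paper does when it rewrites $Q=Q_\omega Q_{\omega'}$ in the proof of Theorem \ref{TheoremMain}) is \emph{asymmetric}. One uses only the KR partition $\Xi_n$ centered at $\omega$, chosen so tall that (i) it refines all cylinders in the support of $S$, (ii) the sets $T^i[u.v]$, $|i|\le 3k+2$, are disjoint, and (iii) $\omega'\notin B:=\bigcup_{|i|\le 3k+2} T^i[u.v]$; condition (iii) is exactly where the different-orbit hypothesis is used. One then splits each $\sigma_{(w_m,d_m)}=P_mQ_m$, where $P_m$ collects the $\sigma_{(u.rv,i)}$ with $i$ in the \emph{middle} range $[3m,|r|-1-3m]$ and $Q_m$ collects the ones near the base and roof. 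The $P_m$ are genuine within-tower $3$-cycles, hence lie in $G_{T,\omega}'$. The $Q_m$ are supported in $B$; since $\omega'\notin B$, each $\sigma_{(u.v,i)}$ entering $Q_m$ preserves the forward $T$-orbit of $\omega'$, so $Q_m\in G_{T,\omega'}'$. Finally one commutes the $Q_m$ past the later $P_l$ (their supports are disjoint) to obtain $S=(P_1\cdots P_k)(Q_1\cdots Q_k)$. No $\mathrm{Alt}$-generation identity inside a single tower is needed; the whole point is the middle/edge split, not a symmetric alignment of both points at a base.
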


 \begin{corollary}\label{CorollaryProductTwoCommutators} (1) Every element of the commutator subgroup $G_T'$  is the product of at most two commutators in $G_T$.

 (2) Every element of $G_T'$ is the product of at most four involutions.
 \end{corollary}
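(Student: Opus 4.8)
The plan is to deduce both parts from Theorem~\ref{TheoremPermutationProduct} together with two classical facts about finite permutation groups: Ore's theorem, that every element of $\mathrm{Alt}(k)$ with $k\ge 5$ is a single commutator, and the elementary fact that every element of $\mathrm{Sym}(k)$ is a product of two involutions (a permutation $\pi$ and its inverse are conjugate by the involution $t$ reversing every cycle, whence $t\pi t=\pi^{-1}$, so $t\pi$ is an involution and $\pi=t\cdot(t\pi)$).

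I would start by fixing $\omega,\omega'\in\Omega$ in distinct $T$-orbits and using Theorem~\ref{TheoremPermutationProduct} to write an arbitrary $g\in G_T'$ as $g=ab$ with $a\in G_{T,\omega}'$ and $b\in G_{T,\omega'}'$; it then suffices to show that each factor is a single commutator in $G_T$ (for (1)) and a product of two involutions in $G_T$ (for (2)), so I would work with $a$ alone. By Proposition~\ref{PropositionGroupPermutations}(iii) the group $G_{T,\omega}'$ is the \emph{increasing} union of the subgroups $\bigl(G_{T,\omega}^{(n)}\bigr)'$, and by Proposition~\ref{PropositionExistKRPartitions}(4) the minimal tower height of $\Xi_n$ tends to infinity; hence I can choose $n$ large enough that $a\in\bigl(G_{T,\omega}^{(n)}\bigr)'$ and, simultaneously, $|r|\ge 5$ for every $r\in R_n$. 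Then Proposition~\ref{PropositionGroupPermutations}(iv) identifies $\bigl(G_{T,\omega}^{(n)}\bigr)'$ with $\prod_{r\in R_n}\mathrm{Alt}(|r|)$, all factors acting on at least five points, sitting inside $G_{T,\omega}^{(n)}\cong\prod_{r\in R_n}\mathrm{Sym}(|r|)$.

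For part (1) I would write $a=(a_r)_{r\in R_n}$, apply Ore's theorem componentwise to get $a_r=[x_r,y_r]$ in $\mathrm{Alt}(|r|)$, and set $a=[(x_r)_r,(y_r)_r]$, a commutator inside $\bigl(G_{T,\omega}^{(n)}\bigr)'\subseteq G_T$; the same for $b$ yields $g=ab$ as a product of at most two commutators in $G_T$. For part (2) I would instead write each component $a_r\in\mathrm{Sym}(|r|)$ as $a_r=\tau_r\tau_r'$ with $\tau_r,\tau_r'$ involutions, realize $\mathrm{Sym}(|r|)$ as permutations acting within the tower $\xi_r^{(n)}$ as in Section~\ref{SectionPermutations}, and observe that the resulting $\tau_r,\tau_r'$ are involutions of $G_T$ supported on the pairwise disjoint towers, hence commute across $r$; thus $\prod_r\tau_r$ and $\prod_r\tau_r'$ are involutions of $G_T$ with $a=\bigl(\prod_r\tau_r\bigr)\bigl(\prod_r\tau_r'\bigr)$. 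Doing the same for $b$ presents $g$ as a product of at most four involutions.

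I expect the main conceptual obstacle — and the reason the counts are $2$ and $4$ rather than $1$ and $2$ — to be that $G_T'$ is not itself a directed union of finite groups but only a product $G_{T,\omega}'\cdot G_{T,\omega'}'$ of two such; Theorem~\ref{TheoremPermutationProduct} is precisely what lets me reduce each half to an honest finite product of alternating (respectively symmetric) groups, where Ore's theorem and strong reality are available. The one technical care needed is to pass to an index $n$ for which every tower of $\Xi_n$ has height at least $5$, so that Ore's theorem applies on each factor; everything else is routine.
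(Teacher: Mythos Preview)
Your proposal is correct and follows essentially the same approach as the paper's own proof: both invoke Theorem~\ref{TheoremPermutationProduct} to factor $g$ through $G_{T,\omega}'$ and $G_{T,\omega'}'$, then appeal to Ore's theorem for part~(1) and to the fact that every permutation is a product of two involutions for part~(2). The paper's version is simply terser, omitting the details you spell out about passing to a level $n$ with all towers of height at least~$5$ and assembling the commutators/involutions componentwise.
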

 \begin{proof} (1) It immediately follows from the previous theorem and the fact that any even permutation in $\mathrm{Sym}(n)$ is a commutator \cite{Ore:1951}.

 (2) Notice that every permutation in $\mathrm{Sym}(n)$  is a product of two involutions. The result follows from Theorem \ref{TheoremPermutationProduct}.
 \end{proof}


\section{Generators}\label{SectionGenerators}

In \cite{Matui:2006} showed that the commutator subgroup $G_T'$ is finitely generated if and only if the system $(\Omega,T)$ is topologically conjugate to a subshift. In  the proof he  explicitly described a set of generators. In this section we reestablish Matui's result and spell out some of the relations satisfied by  the generators.

 Consider a clopen set $U$ such that $\{U, TU, T^2U\}$ are mutually disjoint.   Define $\sigma_U \in \G_T$ as follows:
$$\sigma_U(\omega) = \left\{\begin{array}{lll} T\omega & \mbox{if} & \omega\in U\cup T U \\
T^{-2}\omega & \mbox{if} & \omega \in T^2U\\
\omega & \mbox{if} & \mbox{otherwise }.
\end{array} \right.$$

Note that $\sigma_U = [\eta_U,T]\in G_T'$, where $\eta_U(x) = Tx$ if $x\in U$, $\eta_U(x) = T^{-1}U$ if $x\in TU$, and $\eta_U(x) = x$ elsewhere.  Elements $\sigma_U$ can be viewed as  cycles of length three  $(i,i+1,i+2)$ with set $U$ corresponding to $i$.

By definition, $\sigma_\emptyset $  is the group identity. In particular,   $w\notin L(\Omega)$ if and only if  $\sigma_{[.w]}=1$, the identity map. Recall that for for $w\in L(\Omega)$ and $i\in \mathbb Z$, $(w,i) = T^{i}[.w]$ and, hence,
$$\sigma_{(w,i)} = \sigma_{T^i[.w]}.$$

 The following lemma is due to Matui \cite[Lemma 5.3]{Matui:2006}. Matui used it to show that every element  of the form $\sigma_U$, where $U$ is a clopen set, can be generated by elements $\sigma_{[a.bc]}$, $abc\in L_3(\Omega)$.  He then used the simplicity of the group $G_T'$ to show that the group $G_T'$ is generated by $\sigma_U$, when $U$ runs over all clopen sets, and, as a result, that $G_T'$ is finitely generated, see Theorem \ref{TheoremGenerators}.

 It will be  convenient to use the following modified version of the commutator operation
 \begin{equation}r*s := sr^{-1}s^{-1}r \mbox{ for }r,s\in G_T.\end{equation}
 Observe that the operation ``$*$''  is not associative. Recall that we assume that all minimal systems  satisfy the condition ($\dag$), that is, that  {\it no word in $L_5(\Omega)$ has repeated letters.}

\begin{lemma}\label{LemmaMatui} Let $U,V$ be clopen subsets of $\Omega$ such that $U$, $T(U)$, $T^2(U)\cup  V$, $T(V)$, and $T^2(V)$ are mutually disjoint. Then $\sigma_U \ast \sigma_V = \sigma_{T(U)\cap T^{-1}(V)}$.
\end{lemma}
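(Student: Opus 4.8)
The plan is to verify the identity $\sigma_U * \sigma_V = \sigma_{T(U)\cap T^{-1}(V)}$ by direct computation, tracking where each point of $\Omega$ is sent. Write $P = \sigma_U$, $Q = \sigma_V$, so that $P*Q = Q P^{-1} Q^{-1} P$. Since $\sigma_W$ is a $3$-cycle supported on the set $W \cup TW \cup T^2W$, and we are given that $U, TU, T^2U \cup V, TV, T^2V$ are mutually disjoint, the supports of $P$ and $Q$ overlap only in the set $T^2U \cap V$ — more precisely, $\mathrm{supp}(P) = U \cup TU \cup T^2U$ and $\mathrm{supp}(Q) = V \cup TV \cup T^2V$, and these two sets meet exactly where $T^2U$ meets $V$ (the other pairwise intersections being empty by hypothesis). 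Intuitively $P$ and $Q$ commute off this overlap region, and the $*$-operation of two permutations that are ``mostly disjoint'' produces a $3$-cycle living over the overlap, which is why the answer is $\sigma_{T(U)\cap T^{-1}(V)}$ (note $T(U)\cap T^{-1}(V)$ is the level such that applying $T$ lands in $T^2U \cap V$, after one more $T$ in $TV$, etc.).

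First I would reduce to the case $V \subseteq T^2U$ (equivalently $U' := T^{-2}V \subseteq U$), by the following observation: decompose $U = U_1 \sqcup U_2$ where $U_1 = U \cap T^{-2}V$ and $U_2 = U \setminus T^{-2}V$, and similarly decompose $V$. The parts of $U$ and $V$ that do not participate in the overlap contribute $3$-cycles whose supports are disjoint from everything relevant, and one checks they drop out of the $*$-product (they commute with the other factor, so they cancel in $QP^{-1}Q^{-1}P$). This is the kind of routine but slightly fiddly bookkeeping step; it lets us assume $V = T^2 W$ and $U = W \sqcup U_2$ for a clopen set $W = T(U)\cap T^{-1}(V)\cdot$ shifted appropriately — actually with $W$ chosen so that $T^2 W = V$ and $W \subseteq U$, and then the claim becomes $\sigma_U * \sigma_{T^2W} = \sigma_{TW}$ when $W\subseteq U$ and the disjointness hypothesis holds.

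Second, with that reduction, I would just evaluate $QP^{-1}Q^{-1}P$ on each of the finitely many relevant ``columns'' of points. The ambient disjointness ($\dag$)-type hypothesis guarantees that the orbit segments $W, TW, T^2W, T^3W, T^4W$ are pairwise disjoint (this is where the Standing Assumption, or rather the explicit disjointness hypothesis of the lemma, is used), so a point's trajectory under successive applications of $P^{\pm 1}, Q^{\pm 1}$ stays legible. Concretely: a point $x \in W$ is fixed by $Q$, then $P$ sends it to $Tx$, then $Q$ fixes $Tx$ (since $TW$ is disjoint from $V\cup TV\cup T^2V$, as $V = T^2W$ and $TW$ is disjoint from $T^2W, T^3W, T^4W$), wait — I need to recompute reading $QP^{-1}Q^{-1}P$ right-to-left; I would carefully chase $x\in W$, $x\in TW$, $x\in T^2W = V$-ish region, $x\in T^3 W$, $x\in T^4W$, and $x$ outside all of these, and check in each case the composite equals $\sigma_{TW}$, i.e.\ it cyclically permutes $TW \to T^2W \to T^3W \to TW$ and fixes everything else. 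Since $\sigma_{TW}$ is precisely the $3$-cycle on $TW \cup T^2W \cup T^3W$, and $TW = T(U)\cap T^{-1}(V)$ under the reduction, this finishes the proof.

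The main obstacle I anticipate is purely organizational rather than conceptual: keeping the case analysis honest when the orbit segments of $U$ and $V$ interleave, and making sure every application of $\sigma_U$ or $\sigma_V$ is evaluated on a point that actually lies in (or outside) the relevant part of its support. The disjointness hypotheses in the statement are exactly calibrated to rule out the degenerate overlaps, so once the reduction to $V\subseteq T^2U$ is in place, the remaining verification is a finite check with no real choices. I would present the reduction step briefly and then display the point-chase for the reduced case, possibly as a short table of ``where does $x$ go'', rather than writing out all compositions in line.
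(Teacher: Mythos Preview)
The paper does not prove this lemma; it attributes it to Matui \cite[Lemma~5.3]{Matui:2006} and states it without argument. Your direct-computation strategy is the standard one and is correct in substance: the hypotheses force $\mathrm{supp}(\sigma_U)$ and $\mathrm{supp}(\sigma_V)$ to meet only in $T^2U\cap V$, and a point-chase through $Q P^{-1}Q^{-1}P$ on the five levels $W,TW,\dots,T^4W$ (with $W=U\cap T^{-2}V$) yields exactly the $3$-cycle $\sigma_{TW}=\sigma_{TU\cap T^{-1}V}$.

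Two small comments on your write-up. First, your reduction step is valid but your justification (``they commute with the other factor, so they cancel'') is slightly understated: commuting $\sigma_{U_2}$ past $\sigma_V$ only turns the expression into a conjugate by $\sigma_{U_2}$; you still need that $\sigma_{U_2}$ commutes with $\sigma_{U_1}$ as well (which it does, by disjointness of supports inside $U\cup TU\cup T^2U$) to kill the conjugation. Second, the reduction is in fact optional: the direct point-chase on the general $U,V$ has essentially the same number of cases, since the disjointness hypothesis already forces every point to lie in at most one of $U, TU, T^2U\setminus V, T^2U\cap V, V\setminus T^2U, TV, T^2V$, or none. Either way the computation is finite and routine, and your outline is sound; just present the case table cleanly rather than the stream-of-consciousness version.
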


The following figure illustrates the assumptions of  Lemma \ref{LemmaMatui}. The shaded region represents the orbit of $\sigma_{T(U)\cap T^{-1}(V)}$.

\begin{figure}[ht]
\centering
\includegraphics[width = 5cm ]{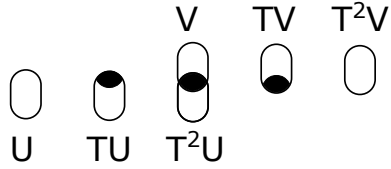}
\caption{Relative positions of sets $U$ and $V$.}
\end{figure}

%


\begin{definition}\label{DefinitionDisjoint} (1) Let $(w,i)$ be a cylinder set. By   a  {\it cylinder partition} of $(w,i)$ we mean a partition of it  into cylinder sets of the form $(v,j)$.

(2)  We will say that two clopen sets $U$ and $V$  are {\it $3$-disjoint} if   $(U\cup TU \cup T^2 U)\cap (V\cup TV\cup T^2 V) =\emptyset$. Equivalently, $U$ and $V$ are $3$-disjoint if  $\sigma_U$ and $\sigma_V$ have disjoint supports.
 \end{definition}

\begin{remark}\label{Remark3disjoint} (1) Let $U\subset \Omega$ be a clopen set such that $\{U,TU,T^2U\}$ are pairwise disjoint. Suppose that $C$ and $D$ are disjoint subsets of $U$. Then $C$ and $D$ are 3-disjoint.

(2) Let $u,v\in L(\Omega)$ and $\Xi$ be the Kakutani-Rokhlin partition associated to the set of return words $R_{u.v}$, see Section \ref{SectionKR_Partitions} for details. By construction, $\Xi$ consists of the  disjoint sets $$\{T^i[u.rv] : r\in R_{u.v},\;0\leq i \leq |r|-1 \}.$$ Suppose that the height of each tower in $\Xi$ is at least three. Then for distinct $r,r'\in R_n$ and arbitrary $0\leq i\leq |r|-3$ and $0\leq i'\leq |r'|-3$,  the sets  $$\bigcup_{j = i}^{i+2}T^i[u.rv] \mbox{ and }\bigcup_{j = i'}^{i'+2}T^{i'}[u.r'v]$$ are disjoint. Hence,  $T^i[u.rv]$ and $T^{i'}[u.r'v]$ are 3-disjoint.
 \end{remark}

 \begin{proposition}\label{PropositionDecidabilityDisjointness} Suppose that the language $L(\Omega)$ of the subshift $(\Omega,T)$ is recursive. Let $(w,i)$ and $(v,j)$ be   cylinder sets. Then

  (1) The inclusion $(v,j) \subset (w,i)$  is algorithmically verifiable.

 (2) The $3$-disjointness of $(w,i)$ and $(v,j)$ is algorithmically verifiable.

  (3)   The set of cylinder partitions of $(w,i)$ is a recursive subset of $\mathcal{FS}(A^*\times \mathbb Z)$, where $\mathcal{FS}(A^*\times \mathbb Z)$ is the set of  all finite subsets of $A^*\times\mathbb Z$.
 \end{proposition}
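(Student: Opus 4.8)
The plan is to reduce everything to the recursiveness of $L(\Omega)$ together with the combinatorial description of cylinder sets. Recall that $(w,i) = T^i[.w]$, and more generally $(v,j) = T^j[.v]$. The key observation is that containment and disjointness of such sets are finitary conditions that can be checked by examining finitely many words of $L(\Omega)$, whose membership is decidable by hypothesis. First I would make the following normalization explicit: a cylinder set $(v,j)$ can be rewritten, by ``padding'' the word $v$ on either side, as $T^{j'}[.v']$ for any longer admissible word $v'$ extending $v$ appropriately; concretely, $T^j[.v] = \bigsqcup T^j[.v a]$ where $a$ ranges over letters with $va \in L(\Omega)$, and similarly one may absorb shifts by replacing $[.v]$ with $[v_0 . v_1\cdots v_{|v|-1}]$. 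Thus, given two cylinder sets $(w,i)$ and $(v,j)$, there is an effectively computable common ``window'' $[p,q]\subset\mathbb Z$ of coordinates and words $W, V \in L_{q-p}(\Omega)$ (or rather finite unions of such, obtained by enumerating admissible extensions) so that $(w,i)$ and $(v,j)$ are each a union of cylinders determined by fixing coordinates in $[p,q]$ to prescribed letters. Once both sets are expressed as unions of ``full-window'' cylinders over the same coordinate window, (1) and (2) become pure finite combinatorics: $(v,j)\subset(w,i)$ iff every window-word appearing in the description of $(v,j)$ also appears in that of $(w,i)$, and $3$-disjointness of $(w,i)$ and $(v,j)$ — i.e. disjointness of $(w,i)\cup(w,i+1)\cup(w,i+2)$ from $(v,j)\cup(v,j+1)\cup(v,j+2)$ — is the assertion that six explicitly described unions of window-cylinders are pairwise disjoint, again checkable by comparing finite lists of admissible words, where admissibility is decided by the $L(\Omega)$-oracle.

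The one subtlety is that a cylinder set need not literally be nonempty: $[.v]$ can be empty even when each proper prefix of $v$ lies in $L(\Omega)$, because $v\notin L(\Omega)$; and conversely a finite union of cylinders could be empty or could fail to be a genuine partition of $(w,i)$ purely because of hidden emptiness or hidden overlaps once one remembers that only admissible words survive. But this is exactly where recursiveness of $L(\Omega)$ is used: emptiness of any displayed cylinder $[u.v]$ is equivalent to $uv\notin L(\Omega)$ (after extending $u,v$ to cover the whole window), which is decidable. So all the finite comparisons above are performed only over the genuinely nonempty window-cylinders, and the algorithm first discards the empty ones.

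For (3) I would proceed as follows. A candidate cylinder partition of $(w,i)$ is a finite subset $C = \{(s_1,k_1),\ldots,(s_m,k_m)\}$ of $A^*\times\mathbb Z$. To decide whether $C\in\mathcal{FS}(A^*\times\mathbb Z)$ is a cylinder partition of $(w,i)$: (a) check, using (1), that each $(s_\ell,k_\ell)\subset(w,i)$ and each $(s_\ell,k_\ell)$ is nonempty (equivalently $s_\ell\in L(\Omega)$); (b) check pairwise disjointness of the $(s_\ell,k_\ell)$, which as in (2) reduces to finite word comparisons over a common window; (c) check that the union is all of $(w,i)$, i.e. that no window-cylinder contained in $(w,i)$ is missed — concretely, expand $(w,i)$ and all $(s_\ell,k_\ell)$ over a common coordinate window $[p,q]$ chosen large enough to contain the supports of $w$ (shifted by $i$) and of all $s_\ell$ (shifted by $k_\ell$), enumerate the finitely many admissible window-words compatible with $(w,i)$, and verify each such word is compatible with exactly one $(s_\ell,k_\ell)$. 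Each of (a),(b),(c) terminates and uses only the $L(\Omega)$-oracle, so the set of cylinder partitions of $(w,i)$ is recursive; and its complement in $\mathcal{FS}(A^*\times\mathbb Z)$ is recursive for the same reason (the algorithm always halts with Yes or No). I expect the main obstacle to be purely bookkeeping: setting up the ``common window'' normalization cleanly and checking that passing between the representations $T^j[.v]$, $[v_0.v_1\cdots]$, and unions of full-window cylinders is effective and preserves the sets in question — once that lemma is in place, (1), (2), (3) all follow by the same finite-search-with-$L(\Omega)$-oracle argument.
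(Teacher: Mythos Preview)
Your proposal is correct and takes essentially the same approach as the paper: both reduce each question to choosing a sufficiently large coordinate window, enumerating the finitely many admissible words over that window using the $L(\Omega)$-oracle, and then checking a finite combinatorial condition. The only slip is the phrase ``six \ldots\ pairwise disjoint'' in (2) --- $3$-disjointness requires only that $(w,i)\cup(w,i+1)\cup(w,i+2)$ be disjoint from $(v,j)\cup(v,j+1)\cup(v,j+2)$, i.e.\ nine cross-pair checks, not that all six sets be mutually disjoint --- but you state the correct definition just before, so this is cosmetic.
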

 \begin{proof} (1) Note that $(v,j)\subset (w,i)$  if and only if $(v,0) \subset (w,i-j)$. To verify the latter inclusion, we use the recursiveness of the language $L(\Omega)$ to iterate over  words in $L(\Omega)$ of the form $lr$, $l,r\in L(\Omega)$ such that $|l| = |r| = \max\{|w|,|v|\}+|i-j|$ and $v$ is a prefix of $r$. If every such word $lr$ contains  $w$ as a subword at the position $|l|-(i-j)$, then $(v,0)$ is a subset of $(w,i-j)$. For otherwise, it is not.

 (2) The $3$-disjointness of $(w,i)$ and $(v,j)$ is equivalent to the $3$-disjointness of $(w,i-j)$ and $(v,0)$.    The cylinder sets  $(w,i-j)$ and $(v,0)$ are not 3-disjoint if and only if there is a word $lr\in L(\Omega)$ such that $|l|  = |r| = \max\{|v|,|w|\} + |j-i|+3$; $w$ appears in $lr$ at the position $|l| - (i-j)$ and $v$ occurs at the position $|l|+k$ for some $-3\leq k \leq 2$. Iterate over all such words $lr$ to verify these conditions.

  (3) Note  that partitions of $(w,i)$ are obtained from  partitions of $(w,0)$ by shifting them by $i$.   Thus we  need to build an algorithm that decides whether  a set $S \in \mathcal{FS}(A^*\times \mathbb Z)$ forms a partition of $(w,0)$. First, we use the recursiveness of the language to verify that $v\in L(\Omega)$, for $(v,j)\in S$.
  Then we use the algorithm from (1) above to  check whether every element of  $S$  is a subset of $(w,0)$.

  Finally, we need to check that the elements of $S$ are disjoint and  cover all of $(w,0)$.  Set $M = \max_{(v,j)\in S}\{|v|+|j| + |w|\}$. Using (1) above and recursiveness of the language, generate all words $\mathcal E$ in $\Omega$ of length $2M$ with $(v,M)\subset (w,0)$ for $v\in \mathcal E$. Then $S$ is a partition of $(w,j)$ iff for every $v\in \mathcal E$ there is exactly one element $(r,k)\in S$ with $(v,M)\subset (r,k)$. Thus, the family $S$ is a cylinder partition of $(w,0)$ iff we get ``yes'' at every step of the algorithm.

 \end{proof}

The following result shows how one can use elements $\sigma_{[a.bc]}$, $abc\in L_3(\Omega)$, to generate arbitrary elements of the form $\sigma_U$, where $U$ is a clopen set.

\begin{proposition}\label{PropositionGroupWords} (1) For any cylinder set $(v,i)$, the sets $(v,i)$, $(v,i+1)$, $(v,i+2)$ are pairwise disjoint. In particular, the element $\sigma_{(v,i)}$ is well-defined.

(2) Let $U$ be a clopen set such that the sets $\{U,TU,T^2U\}$ are pairwise disjoint. Then for any  clopen partition  $\mathcal C$  of $U$, we have that   $$\sigma_{U} = \prod_{D\in \mathcal C}\sigma_{D}$$ and the elements in this product commute. In particular, for  $a,b,c\in A$, we have  that $$\sigma_{[a.]} = \prod_{b,c\in A}\sigma_{[a.bc]}\mbox{ and }\sigma_{[.bc]} = \prod_{a\in A}\sigma_{[a.bc]}.$$

(3) For $w\in A^n$, $n\geq 4$, we have that
\begin{equation}\label{EquationConvolutions1}
\sigma_{[w_0|w_1\cdots w_{n-1}]} =  \sigma_{[.w_0w_1]} \ast (\sigma_{[.w_1w_2]}\ast \cdots \ast(\sigma_{[.w_{n-4}w_{n-3}])} \ast \sigma_{[w_{n-3}|w_{n-2}w_{n-1}}])\cdots).
\end{equation}

(4) For  $w\in A^n$, $n\geq 4$, and $2\leq k \leq n-1$, we have that
\begin{equation}\label{EquationConvolutions2}
\sigma_{[w_0\cdots w_{k-1}|w_k\cdots w_{n-1}]}  =   (\cdots ((\sigma_{[w_0|w_1\cdots w_{n-1}]}
 \ast  \sigma_{[w_2.]}) \ast \sigma_{[w_3.]}) \ast \cdots \ast \sigma_{[w_{k}.]}).
\end{equation}
\end{proposition}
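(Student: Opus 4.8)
The plan is to prove all four parts by computing with the ``$\ast$'' operation and Lemma~\ref{LemmaMatui}, reducing everything to the behavior of three-cycles $\sigma_U=(i,i+1,i+2)$ under conjugation. Part~(1) is a special case of the disjointness already recorded in Proposition~\ref{PropositionGroupPermutations}(1)/Remark~\ref{Remark3disjoint}(1): for a cylinder set $(v,i)$, the point $\omega\in(v,i)$ has $\omega\in[.v]$ shifted by $i$, and since no word of length five in $L(\Omega)$ has repeated letters (condition ($\dag$)), the letter at coordinate $-i$ determines that $(v,i)$, $(v,i+1)$, $(v,i+2)$ cannot meet; hence $\sigma_{(v,i)}$ is well-defined. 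For part~(2), I would first check that $\sigma_U$ commutes with $\sigma_D$ whenever $D\subset U$ (their supports $U\cup TU\cup T^2U$ and $D\cup TD\cup T^2D$ need not be disjoint, so this is the one place requiring care); in fact, writing the atoms of the partition $\mathcal C$, each $\sigma_D$ acts on the three-level tower over $D$ and $\sigma_U$ permutes the three levels over $U=\bigsqcup D$ ``level-wise'', so $\sigma_U=\prod_{D\in\mathcal C}\sigma_D$ holds as an equality of permutations of $\Omega$ and the factors commute because the cyclic group of order $3$ acting on each tower is abelian and the towers over distinct $D$'s occupy disjoint subsets of $\Omega$. The two displayed identities $\sigma_{[a.]}=\prod_{b,c}\sigma_{[a.bc]}$ and $\sigma_{[.bc]}=\prod_a\sigma_{[a.bc]}$ are then the instances where $\mathcal C=\{[a.bc]:bc\in L_2(\Omega)\}$ refines $[a.]$ and $\mathcal C=\{[a.bc]:a\in A\}$ refines $[.bc]$.

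For parts~(3) and~(4) I would argue by induction using Lemma~\ref{LemmaMatui}, which says $\sigma_U\ast\sigma_V=\sigma_{T(U)\cap T^{-1}(V)}$ whenever $U,TU,T^2U\cup V,TV,T^2V$ are mutually disjoint. For~(3), the key observation is that $T([.w_0w_1])\cap T^{-1}([.w_1w_2])=T([.w_0w_1])\cap[.w_1w_2\cdots]=[w_0.w_1w_2]$ (the cylinder where coordinate $-1$ is $w_0$, coordinate $0$ is $w_1$, coordinate $1$ is $w_2$), provided the required $3$-disjointness holds — and it does, because by ($\dag$) the words $w_0w_1$, $w_1w_2$ sit inside a length-five window with distinct letters, so the supports are disjoint as in Remark~\ref{Remark3disjoint}. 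Iterating: the innermost $\ast$ produces $\sigma_{[w_{n-4}.w_{n-3}w_{n-2}w_{n-1}]}$, and each further application of $\sigma_{[.w_j w_{j+1}]}\ast(-)$ prepends one more letter on the left, eventually yielding $\sigma_{[w_0.w_1\cdots w_{n-1}]}$; I need to verify at each stage that the $3$-disjointness hypothesis of Lemma~\ref{LemmaMatui} is met, which again follows from ($\dag$) since all relevant coordinates lie in a window of length at most five (or, when $n\geq 5$, one reduces to the length-five case because a longer cylinder is contained in a length-five one). For~(4), the identity $T([w_0.w_1\cdots w_{n-1}])\cap T^{-1}([w_2.])=[w_0w_1.w_2\cdots w_{n-1}]$ (here $[w_2.]=[.w_2]$ shifted, i.e.\ coordinate $-1$ equals $w_2$) is the engine: each $\ast\,\sigma_{[w_j.]}$ absorbs one more letter into the ``past'' part of the split, moving the decimal point one step right, and after $k-1$ steps we reach $\sigma_{[w_0\cdots w_{k-1}.w_k\cdots w_{n-1}]}$. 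Checking $3$-disjointness for~(4) is slightly more delicate since $[w_j.]$ is a ``thick'' cylinder (a single specified letter), but the supports still lie in a length-five window around the split point, so ($\dag$) applies; this bookkeeping — confirming the hypotheses of Lemma~\ref{LemmaMatui} at every step and identifying the resulting intersection cylinder correctly — is the main obstacle, and it is routine rather than deep.

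Two auxiliary facts make the induction clean and are worth isolating first: (a) for cylinder sets, $T^i[.v]\cap T^j[.u]$ is again a cylinder set (or empty), computed by overlaying the two words at the appropriate offset and checking consistency — this lets me read off each $T(U)\cap T^{-1}(V)$ as an explicit $[u'.v']$; and (b) under ($\dag$), two cylinder sets whose defining windows both lie within some common length-five sub-window, and which differ in the constrained coordinates, are automatically $3$-disjoint, which is exactly what is needed to invoke Lemma~\ref{LemmaMatui}. With (a) and (b) in hand, parts~(3) and~(4) become a short induction on the number of letters being peeled off or absorbed. I expect the only genuine subtlety is part~(2)'s commutation claim when $D\subset U$ rather than $D$ being $3$-disjoint from the rest — there one must look directly at the action on the three-level towers rather than appeal to disjoint supports — but since the acting group on each tower is cyclic of order three, commutativity is immediate once the level-wise description is set up.
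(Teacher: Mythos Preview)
Your plan is the paper's plan: part~(1) from ($\dag$), part~(2) from the definition, parts~(3)--(4) by induction via Lemma~\ref{LemmaMatui}. Two comments.

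In~(2) you flag a nonexistent subtlety. The claim is only that the factors $\sigma_D$, $D\in\mathcal C$, commute with \emph{each other}, not with $\sigma_U$; and for distinct $D,D'\in\mathcal C$ the supports $\bigcup_{i=0}^2 T^iD$ and $\bigcup_{i=0}^2 T^iD'$ are genuinely disjoint (this is exactly Remark~\ref{Remark3disjoint}(1)), so ordinary disjoint-support commutation suffices and no tower-by-tower argument is needed.

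In~(3) there is a real gap. The statement is for all $w\in A^n$, not only $w\in L_n(\Omega)$, and your disjointness justification (``by ($\dag$) the words $w_0w_1$, $w_1w_2$ sit inside a length-five window with distinct letters'') reads as an assertion about the letters of $w$, which need not hold when $w\notin L(\Omega)$. The paper handles this case separately: it takes the longest suffix $w_k\cdots w_{n-1}\in L(\Omega)$, observes that the inductive computation up to that stage is legitimate and yields $\sigma_{[w_k|w_{k+1}\cdots w_{n-1}]}$, and then checks that the next application $\sigma_{[.w_{k-1}w_k]}\ast(\cdot)$ still satisfies the hypotheses of Lemma~\ref{LemmaMatui} and produces $\sigma_{[w_{k-1}|w_k\cdots w_{n-1}]}=\sigma_\emptyset=1$, whence the entire nested expression collapses to $1$, matching the left side. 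One could alternatively argue that the hypotheses of Lemma~\ref{LemmaMatui} hold at every stage regardless of whether $w\in L(\Omega)$ --- any failure of the required disjointness would exhibit a point of $\Omega$ whose length-five window has a repeated letter, contradicting ($\dag$) --- but this must be stated explicitly, since ($\dag$) constrains $L(\Omega)$, not $w$. The omitted case matters downstream: it is precisely what drives Corollary~\ref{CorollaryWordIdentity}. (For~(4) the analogous issue is harmless, as the paper notes: if $w\notin L(\Omega)$ then $\sigma_{[w_0|w_1\cdots w_{n-1}]}=1$ and $1\ast x=1$.)
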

\begin{proof}  (1) The assumption that no word of length five has repeated letters  implies that for every $a\in A$, the cylinder sets $\{(a,0),(a,1),(a,2)\} $
are pairwise disjoint. For otherwise, the language $L(\Omega)$ would contain a word of the form $a \Box  a$ or $a\Box \Box a$, where $\Box$ is a placeholder for an arbitrary letter of $A$, which is a contradiction.    Thus, $\sigma_{(a,i)}$ is well-defined. Noticing that every cylinder set is a subset of $(a,i)$ for some $a\in A$ and $i\in \mathbb Z$, we get the proof of the first statement.

(2) The second statement immediately follows from the definition of $\sigma_U$ and the statement (1) above.

(3) Consider a word $abcd \in L_4(\Omega)$. We claim  that the sets $U = [b|cd]$ and $V = [.ab]$ satisfy the assumptions of Lemma \ref{LemmaMatui}. Indeed, suppose that $(U\cup TU \cup T^2U) \cap V \neq \emptyset$.   Then at least one of the sets $[b|cd]\cap [.ab]$, $[bc|d]\cap [.ab]$, and $[bcd.]\cap [.ab]$ is non-empty.  Now if  $(V\cup TV \cup T^2V) \cap U \neq \emptyset$, then  one of the sets $[.ab]\cap [b|cd]$, $[.\Box ab]\cap [b|cd]$, $[.\Box \Box ab]\cap [b|cd]$ is non-empty. In either case, there is a word of length five with at least two occurrences of the letter $b$, which contradicts the assumptions.

Thus, by Lemma \ref{LemmaMatui} for any word $abcd\in L_4(\Omega)$, we have that $$\sigma_{[.ab]} \ast \sigma_{[b|cd]} = \sigma_{[a|bcd]}.$$ Equation (\ref{EquationConvolutions1}) for $w\in L(\Omega)$ is established by the induction on the length of $w$.

To complete the proof, we need to  show that both  sides of Equation (\ref{EquationConvolutions1}) are trivial whenever $w\notin L(\Omega)$.  Fix a word $w\notin L(\Omega)$.  Write it out as $w = w_0 \cdots w_{n-1}$, $w_i\in A$, $i=0,\ldots,n-1$. Choose the largest $k\leq n-1$ so that $w_{k}\cdots w_{n-1} \in L(\Omega)$. If $w_{n-3}w_{n-2}w_{n-1} \notin L(\Omega)$, then $\sigma_{[w_{n-3}|w_{n-2}w_{n-1}}]) = 1$ and both  sides of Equation (\ref{EquationConvolutions1}) become trivial. Thus, we can assume that $k\leq n-3$.

Set $U = [.w_{k-1}w_k]$ and $V = [w_k|w_{k+1}\cdots w_{n-1}]$.  It follows from the assumption that no word of length five in $L(\Omega)$ has repeated letters that the sets $U$ and $V$ satisfy Lemma \ref{LemmaMatui}. Applying Equation (\ref{EquationConvolutions1}) to $\sigma_{[w_k|w_{k+1}\cdots w_{n-1}]}$ we get that
\begin{eqnarray*}\sigma_{[w_{k-1}|w_k\cdots w_{n-1}]} & = & \sigma_{[.w_{k-1}w_k]} \ast \sigma_{[w_k|w_{k+1}\cdots w_{n-1}]} \\
& = &  \sigma_{[.w_{k-1}w_k]}\ast( \cdots \ast(\sigma_{[.w_{n-4}w_{n-3}])} \ast \sigma_{[w_{n-3}|w_{n-2}w_{n-1}}])\cdots)\end{eqnarray*}
Since  $\sigma_{[w_{k-1}|w_k\cdots w_{n-1}]} =1$, Equation (\ref{EquationConvolutions1}) holds trivially for $\sigma_{[w_{0}|w_1\cdots w_{n-1}]}$.
x

 (4) We note that it suffice to verify Equation (\ref{EquationConvolutions2}) only for $w\in L(\Omega)$. Set $U = [w_0|w_1\cdots w_{n-1}]$ and $V = [.w_2]$. The sets $U$ and $V$ satisfy the assumptions of Lemma \ref{LemmaMatui}. Hence $$\sigma_{[w_0w_1|w_2\cdots w_{n-1}]} =  \sigma_{[w_0 | w_1\cdots w_{n-1}]} \ast \sigma_{[.w2]}.$$ The proof follows by induction.
\end{proof}

The following result summarizes the properties of $\sigma_U$'s and lists some of the relations they satisfy.  It will be shown in Theorem \ref{TheoremMain} that this list of relations is, in fact, complete.

\begin{corollary}\label{CorollarySigmaRelations} Let $(\Omega,T)$ be a minimal subshift satisfying the condition ($\dag$). Then the elements $\{\sigma_{(w,i)}\}$ satisfy the following relations: for every $w,v\in L(\Omega)$, $i,j\in \mathbb Z$, and a cylinder partition $C$ of $(w,i)$,
\begin{flalign*}
& \left(\sigma_{(w,i)}\right)^3=1   \\
& \left(\sigma_{(w,i)}\cdot \sigma_{(w,i+1)}\right)^2 = 1   \\
& \sigma_{(w,i+1)} = \sigma_{(w,i)} \ast \sigma_{(w,i+2)}  \\
& \sigma_{(w,i)} = \prod_{(s,k) \in C}\sigma_{(s,k)}
\end{flalign*}
and
\begin{equation*} [\sigma_{(w,i)},\sigma_{(v,j)}] = 1,\mbox{ whenever }(w,i),(v,j)\mbox{ are 3-disjoint}.
\end{equation*}
\end{corollary}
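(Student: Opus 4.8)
The plan is to verify each of the five relations directly from the definition of $\sigma_U$ and the results already established in this section, handling the trivial (out-of-language) cases separately throughout. I would first dispose of the degenerate situations: if $w\notin L(\Omega)$ then $\sigma_{(w,i)}=\sigma_{[.w]}=1$ by the remark following the definition of $\sigma_\emptyset$, and every relation is then an identity among trivial elements (for the partition relation one also uses that a cylinder partition of an empty set is empty, so the product is empty). So I may assume all words involved lie in $L(\Omega)$.

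For the first relation $(\sigma_{(w,i)})^3=1$: since $(w,i),(w,i+1),(w,i+2)$ are pairwise disjoint by Proposition \ref{PropositionGroupWords}(1), the element $\sigma_{(w,i)}$ acts as the $3$-cycle $T^i[.w]\to T^{i+1}[.w]\to T^{i+2}[.w]\to T^i[.w]$ and as the identity elsewhere, so cubing it returns to the identity by direct computation. For the second relation $(\sigma_{(w,i)}\sigma_{(w,i+1)})^2=1$: here $\sigma_{(w,i+1)}$ is also a $3$-cycle on $(w,i+1),(w,i+2),(w,i+3)$, and these six cylinder sets (four of them distinct) all lie inside a single $T$-tower of height at least four over a piece of $[.w]$, so the product $\sigma_{(w,i)}\sigma_{(w,i+1)}$ restricted to those levels is the product of two $3$-cycles on the symbols $\{i,i+1,i+2\}$ and $\{i+1,i+2,i+3\}$; this product has order $2$ in the symmetric group (it is the double transposition $(i,i+3)(i+1,i+2)$ up to labeling), and is the identity off those levels. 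I would phrase this via the canonical embedding of $\mathrm{Sym}(n)$ into $G_T$ described in Section \ref{SectionPermutations} to make the order computation clean. For the third relation $\sigma_{(w,i+1)}=\sigma_{(w,i)}*\sigma_{(w,i+2)}$: this is an instance of Lemma \ref{LemmaMatui} with $U=T^i[.w]$ and $V=T^{i+2}[.w]$; I need to check that $U,TU,T^2U\cup V,TV,T^2V$ — i.e. the levels $i,i+1,i+2\cup(i+2),i+3,i+4$ of the tower over $[.w]$ — are mutually disjoint, which follows from Proposition \ref{PropositionGroupWords}(1) applied to the point $\omega$ whose tower contains $[.w]$, together with the standing assumption ($\dag$) forbidding repeated letters in words of length five (exactly as in the proof of Proposition \ref{PropositionGroupWords}(3)); then Lemma \ref{LemmaMatui} gives $\sigma_U*\sigma_V=\sigma_{TU\cap T^{-1}V}=\sigma_{T^{i+1}[.w]\cap T^{i+1}[.w]}=\sigma_{(w,i+1)}$.

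The fourth relation $\sigma_{(w,i)}=\prod_{(s,k)\in C}\sigma_{(s,k)}$ is exactly Proposition \ref{PropositionGroupWords}(2) applied to $U=T^i[.w]=(w,i)$ and the clopen partition $\mathcal{C}=C$: since the $(s,k)$ are pairwise disjoint cylinder sets whose union is $(w,i)$, and $\{(w,i),(w,i+1),(w,i+2)\}$ are pairwise disjoint by part (1), the cited statement gives both the product formula and the commutativity of its factors. Finally the commutator relation: if $(w,i)$ and $(v,j)$ are $3$-disjoint then by Definition \ref{DefinitionDisjoint}(2) the supports of $\sigma_{(w,i)}$ and $\sigma_{(v,j)}$ are disjoint, hence the two homeomorphisms commute as elements of $G_T$, so $[\sigma_{(w,i)},\sigma_{(v,j)}]=1$. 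The only genuine obstacle is the bookkeeping in the second and third relations — making precise that the relevant cylinder sets all sit inside one tower and that ($\dag$) forces the needed disjointness of the $i,\dots,i+4$ levels — but this is entirely parallel to the disjointness verification already carried out in the proof of Proposition \ref{PropositionGroupWords}(3), so no new idea is required.
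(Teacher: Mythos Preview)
Your proposal is correct and is exactly the direct verification the paper intends: the corollary is stated without proof as a summary of the preceding material, and each relation indeed follows from the definition of $\sigma_U$, Proposition~\ref{PropositionGroupWords}(1)--(2), Lemma~\ref{LemmaMatui}, and Definition~\ref{DefinitionDisjoint}(2), with condition~($\dag$) supplying the required disjointness of levels $i,\dots,i+4$. One cosmetic slip: the product $\sigma_{(w,i)}\sigma_{(w,i+1)}$ is the double transposition $(i,i+1)(i+2,i+3)$ rather than $(i,i+3)(i+1,i+2)$, but your hedged claim and the order-$2$ conclusion are unaffected.
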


The following result follows from Proposition \ref{PropositionGroupWords} and allows us to relate the word problem in full groups to the decidability of the language $L(\Omega)$.

\begin{corollary}\label{CorollaryWordIdentity} Suppose that $(\Omega,T)$ is a minimal subshift over a finite alphabet $A$ satisfying the condition ($\dag$). Let $w = w_0\cdots w_{n-1}\in A^n$, $n\geq 4$. Then $w\in L(\Omega)$ iff
$$\sigma_{[.w_0w_1]} \ast (\sigma_{[.w_1w_2]}\ast \cdots \ast(\sigma_{[.w_{n-4}w_{n-3}]} \ast \sigma_{[w_{n-3}.w_{n-2}w_{n-1}}])\cdots) = 1\mbox{ in }G_T'.$$
\end{corollary}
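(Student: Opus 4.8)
My plan is to reduce the corollary to a single application of Equation~(\ref{EquationConvolutions1}). The first step is to notice that the displayed $\ast$-product is literally the right-hand side of that equation, so Proposition~\ref{PropositionGroupWords}(3) collapses it to the single element $\sigma_{[w_0|w_1\cdots w_{n-1}]}$. Writing $U = [w_0|w_1\cdots w_{n-1}]$, I would use Proposition~\ref{PropositionGroupWords}(1) to identify $U$ with the cylinder $(w,1)=T[.w]$, one of the three pairwise disjoint sets $(w,1),(w,2),(w,3)$, so that $\sigma_U=\sigma_{(w,1)}$ is a well-defined three-cycle. The entire question thus becomes: when is $\sigma_{(w,1)}$ trivial in $G_T'$?

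The second step answers this. By the definition of $\sigma_U$ and the convention recorded just before Proposition~\ref{PropositionGroupWords} — namely that $\sigma_U=1$ exactly when $U=\emptyset$ — the element $\sigma_{(w,1)}$ is nontrivial if and only if the cylinder $(w,1)=T[.w]$ is nonempty. But $(w,1)$ is nonempty precisely when the finite word $w$ occurs in some sequence of $\Omega$, i.e. precisely when $w\in L(\Omega)$. Chaining these equivalences, $w\in L(\Omega)$ holds if and only if the displayed product is a nontrivial element of $G_T'$ (equivalently, the product equals $1$ exactly in the complementary range $w\notin L(\Omega)$). This is the membership criterion the corollary records, and it is exactly the input needed for the word-problem analysis in the sequel.

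The point I would treat as the main obstacle is making sure Equation~(\ref{EquationConvolutions1}) is available for \emph{every} word $w\in A^{n}$, not just for those already known to lie in $L(\Omega)$ — after all, membership of $w$ is the very thing the criterion is meant to detect, so the identification of the product with $\sigma_{(w,1)}$ must hold unconditionally. This is precisely why the proof of Proposition~\ref{PropositionGroupWords}(3) proceeds in two stages: the identity is first established for $w\in L(\Omega)$ by induction through Lemma~\ref{LemmaMatui}, and then extended to $w\notin L(\Omega)$ by passing to the longest suffix $w_{k}\cdots w_{n-1}$ lying in $L(\Omega)$ and checking that both sides degenerate to the identity. I would simply invoke this result, after which the desired equivalence between $w\in L(\Omega)$ and the (non)triviality of the product follows with no further hypotheses on $w$.
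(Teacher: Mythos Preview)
Your proposal is correct and is exactly the derivation the paper has in mind: the corollary is stated without proof, as an immediate consequence of Proposition~\ref{PropositionGroupWords}(3) together with the observation (recorded just before that proposition) that $\sigma_{[.w]}=1$ iff $w\notin L(\Omega)$. You are also right to flag that the equivalence in the printed statement is off by a negation---the intended reading, confirmed by the way the result is invoked in the proof of Theorem~\ref{TheoremWordProblem}, is that $w\in L(\Omega)$ iff the displayed product is \emph{nontrivial}---and your emphasis that Equation~(\ref{EquationConvolutions1}) was established for all $w\in A^n$ (not only $w\in L(\Omega)$) is precisely the point that makes the criterion usable.
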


  We now present an alternate proof of Theorem 5.4 in \cite{Matui:2006} that does not rely on the simplicity of the commutator subgroup $G_T'$.

\begin{theorem}[\cite{Matui:2006}]\label{TheoremGenerators} Suppose that $(\Omega,T)$ is a minimal subshift satisfying the condition ($\dag$). Then the commutator subgroup $G_T'$ is generated by $\sigma_{[a.bc]}$, $abc\in L_3(\Omega)$.
\end{theorem}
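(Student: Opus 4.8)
The plan is to show that $G_T'$ is generated by the clearly smaller family of "basic" three-cycles $\sigma_U$ with $U$ clopen, and then to reduce each such $\sigma_U$ to a product of the generators $\sigma_{[a.bc]}$ with $abc \in L_3(\Omega)$. For the first reduction I would invoke Theorem~\ref{TheoremPermutationProduct}: fixing two points $\omega, \omega'$ in distinct $T$-orbits, we have $G_T' = G_{T,\omega}' \cdot G_{T,\omega'}'$, and by Proposition~\ref{PropositionGroupPermutations}(iii)--(iv) each of $G_{T,\omega}'$ and $G_{T,\omega'}'$ is the increasing union of the groups $(G_{T,\omega}^{(n)})' \cong \prod_{r \in R_n}\mathrm{Alt}(|r|)$, which are generated by the three-cycles $\sigma_{(u_n.rv_n,i)}$ associated to the Kakutani--Rokhlin towers. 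Since every atom $T^i[u_n.rv_n]$ of a KR partition is a cylinder set, each such generator is an element $\sigma_{(w,i)}$ with $w \in L(\Omega)$; hence $G_T'$ is generated by $\{\sigma_{(w,i)} : w \in L(\Omega),\ i \in \mathbb Z\}$, equivalently by $\{\sigma_U : U \text{ clopen}\}$, because every clopen set is a finite disjoint union of cylinders and Proposition~\ref{PropositionGroupWords}(2) expresses $\sigma_U$ over such a partition.

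The second reduction is the combinatorial heart and is exactly what Proposition~\ref{PropositionGroupWords} was built for. Given a cylinder set $(w,i) = T^i[.w]$, translation invariance lets me conjugate by $T^{-i}$ (or simply re-index) so that I only need to handle $\sigma_{[.w]}$ for $w \in L(\Omega)$; and by Proposition~\ref{PropositionGroupWords}(2) a one-sided cylinder $[.w]$ with $|w| = n$ is a disjoint union over the one remaining free coordinate, so it suffices to treat two-sided cylinders $[w_0 \cdots w_{k-1} | w_k \cdots w_{n-1}]$. Now Equation~(\ref{EquationConvolutions2}) of Proposition~\ref{PropositionGroupWords}(4) rewrites any such two-sided cylinder three-cycle in terms of $\sigma_{[w_0 | w_1 \cdots w_{n-1}]}$ together with the singleton-letter elements $\sigma_{[w_j.]}$; then Equation~(\ref{EquationConvolutions1}) of part~(3) rewrites $\sigma_{[w_0 | w_1 \cdots w_{n-1}]}$ as an iterated $*$-product of the $\sigma_{[.w_jw_{j+1}]}$ and a single $\sigma_{[w_{n-3}|w_{n-2}w_{n-1}]}$. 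Finally part~(2) expresses each $\sigma_{[.ab]}$, each $\sigma_{[a.]}$, and each $\sigma_{[a|bc]}$ as a product of the $\sigma_{[a.bc]}$ with $abc \in L_3(\Omega)$ (and the terms that are not in the language are the identity). Chaining these together exhibits every $\sigma_U$ as a group word in the $\sigma_{[a.bc]}$, so these finitely many elements generate $G_T'$.

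The step I expect to be the main obstacle is bookkeeping the case analysis when words fall \emph{outside} $L(\Omega)$: the generating set is indexed only by genuine length-three words $abc \in L_3(\Omega)$, yet the inductive formulas (\ref{EquationConvolutions1}) and (\ref{EquationConvolutions2}) are stated for arbitrary $w \in A^n$, and one must check that whenever an intermediate cylinder $[.w']$ is empty the corresponding $\sigma$ is the identity and the recursion still closes up consistently — this is precisely the content of the final paragraphs of the proof of Proposition~\ref{PropositionGroupWords}(3), and it relies essentially on the standing condition~($\dag$) that no word of length five has a repeated letter (so that Lemma~\ref{LemmaMatui} applies at every step). A secondary point requiring care is that the KR-tower generators produced by Proposition~\ref{PropositionGroupPermutations} depend on the chosen base points $\omega, \omega'$ and on $n$; one should note that the minimum tower height can be taken $\geq 3$ for all $n$ (Proposition~\ref{PropositionExistKRPartitions}(4)) so that the $\sigma_{(u_n.rv_n,i)}$ are all well-defined, and that no uniform bound on $n$ is needed since we only claim a generating set, not a presentation, here.
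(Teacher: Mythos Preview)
Your proposal is correct and follows essentially the same route as the paper: both factor $G_T' = G_{T,\omega}'\cdot G_{T,\omega'}'$ via Theorem~\ref{TheoremPermutationProduct}, observe that each factor is generated by the tower three-cycles $\sigma_{T^iU_v}$ coming from the KR partitions (Proposition~\ref{PropositionGroupPermutations}), and then reduce those to the $\sigma_{[a.bc]}$ through Proposition~\ref{PropositionGroupWords}. One cosmetic remark: the phrase ``conjugate by $T^{-i}$'' is not quite available since $T\notin G_T'$, but this is harmless because the KR atoms $T^i[u_n.rv_n]$ with $0\le i\le |r|-1$ are already two-sided cylinders $[s|t]$, so parts~(3)--(4) of Proposition~\ref{PropositionGroupWords} apply directly.
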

\begin{proof}  Consider an arbitrary element $Q\in G_T'$.  Fix two points $w,w'\in \Omega$ taken from distinct $T$-orbits.  It follows from   \cite[Theorem 5.4]{GrigorchukMedynets}, see also Theorem \ref{TheoremPermutationProduct}, that there exist $P\in G_{T,\omega}'$ and $P'\in G_{T,\omega'}'$ such that $Q = PP'$.  To establish the result, it suffices to prove that both  $P$ and $P'$ are generated by $\sigma_{[a.bc]}$, $abc\in L_3(\Omega)$.

By definition of $P$, there exists a Kakutani-Rokhlin partition $\Xi = \{\xi_v\}$ such that $P = \prod_v P_v$, where $P_v$ is an even  permutation  acting within the $T$-tower $\xi_v = \{U_v,TU_v,\cdots, T^{h_v-1}U_v\}$. Here $h_v$ is the height of  $\xi_v$ and $U_v$ is its base.  Proposition \ref{PropositionGroupWords} implies that the elements $\sigma_{[a.bc]}$ can be used to generate elements of the form $\sigma_{T^iU_v}$. Since $P$ is even, it is generated by 3-cycles of the form $\sigma_{T^iU_v}$. This shows that $P$ is generated by $\sigma_{[a.bc]}$, $abc\in L_3(\Omega)$. The proof for $P'$ is analogous.
\end{proof}

\begin{corollary}\label{CorollaryN_Generators} Let $(\Omega,T)$ be an arbitrary minimal subshift. The commutator subgroup $G_T'$ is generated by $\sigma_{(w,1)}$, $w\in L_n(\Omega)$, for some $n\geq 3$.  If the language $L(\Omega)$ is recursive, then $n$ is effectively computable.
\end{corollary}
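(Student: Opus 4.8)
The plan is to combine Theorem~\ref{TheoremGenerators} with Proposition~\ref{PropositionConjugateSubshift} to handle the general case, and then to run through the chain of generation results in Proposition~\ref{PropositionGroupWords} to pin down the single height $n$ and to extract effectivity. First I would reduce to a system satisfying the condition ($\dag$): by Proposition~\ref{PropositionConjugateSubshift} the given minimal subshift $(\Omega,T)$ is topologically conjugate to a minimal subshift $(Y,S)$ in which no word of length five has a repeated letter, and this conjugacy induces an isomorphism $G_T'\cong G_S'$; moreover $L(\Omega)$ is recursive iff $L(Y)$ is. So it suffices to prove the statement for a system satisfying ($\dag$), and then to track how the conjugacy $\pi$ (whose letters are the words of $L_{n_0}(X)$) transports the generating set back to $(\Omega,T)$; since $\pi$ is a sliding-block code, a word in $L_m(Y)$ corresponds to a word of length $m+n_0-1$ in $L(X)$, so a finite generating set in fixed length for $G_S'$ pulls back to a finite generating set in fixed length for $G_T'$.

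For a system satisfying ($\dag$), Theorem~\ref{TheoremGenerators} gives that $G_T'$ is generated by $\{\sigma_{[a.bc]} : abc\in L_3(\Omega)\}$. Now I would push these generators up to a uniform ``window position'': using Proposition~\ref{PropositionGroupWords}(2), $\sigma_{[a.bc]}$ is a product of the elements $\sigma_{[a'.bc]}$ over all left extensions $a'$ making $a'abc\in L(\Omega)$ together with... more directly, I would express each $\sigma_{[a.bc]}$ in terms of elements $\sigma_{(w,1)}$ with $|w|$ fixed. The cleanest route: note $(a,1)$, $(b,1)$, $(c,1)$ etc. are governed by longer words, and by repeatedly applying Proposition~\ref{PropositionGroupWords}(2) (refining a cylinder into a cylinder partition) one writes $\sigma_{[a.bc]}=\sigma_{(bc,1)}$ up to relabelling, then $\sigma_{(bc,1)}=\prod \sigma_{(w,1)}$ over words $w\in L_n(\Omega)$ of some fixed length $n\geq 3$ whose corresponding cylinders partition $(bc,1)$ — the existence of such an $n$ follows from the fact that the partitions into cylinders $(w,1)$, $w\in L_n(\Omega)$, refine any fixed clopen cover as $n\to\infty$ (minimality; cf. Proposition~\ref{PropositionExistKRPartitions}(1) and (5)). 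Choosing $n$ large enough to simultaneously refine the finitely many cylinders $[a.bc]$, $abc\in L_3(\Omega)$, gives a single $n$ that works, and then $\{\sigma_{(w,1)} : w\in L_n(\Omega)\}$ generates $G_T'$.

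For effectivity, assume $L(\Omega)$ is recursive. Then $L_3(\Omega)$ is computable, and for each candidate $n$ one can, by Proposition~\ref{PropositionDecidabilityDisjointness}(3), decide whether $\{(w,1):w\in L_n(\Omega)\}$ contains a cylinder partition of each $[a.bc]$ (equivalently, whether these cylinders refine the $[a.bc]$); since some $n$ works, searching $n=3,4,5,\dots$ terminates and outputs the least such $n$, which is therefore effectively computable. The main obstacle I anticipate is bookkeeping rather than conceptual: one must be careful that the conjugacy in Proposition~\ref{PropositionConjugateSubshift} both preserves recursiveness with an effective bound on $n_0$ (which the proof of that proposition already provides) and transports a \emph{fixed-length} generating set to a \emph{fixed-length} generating set — so that the two reductions (to ($\dag$), and from length-$3$ to length-$n$ generators) compose to give a uniform $n$ for the original system. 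Everything else is a direct application of the already-established generation lemmas.
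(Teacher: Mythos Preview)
Your overall strategy matches the paper's: pass to a conjugate system $(Y,S)$ satisfying $(\dag)$ via Proposition~\ref{PropositionConjugateSubshift}, invoke Theorem~\ref{TheoremGenerators} there, and transport the generators back through the sliding-block conjugacy $\pi$. The paper does exactly this, and for effectivity simply observes that $n_0$ in Proposition~\ref{PropositionConjugateSubshift} is found by a terminating search, so $n=n_0+2$ is computable.

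Where you take an unnecessary detour is in the second paragraph. In the $(\dag)$ system the generators $\sigma_{[a.bc]}$ from Theorem~\ref{TheoremGenerators} are \emph{already} of the required form: $[a.bc]=T[.abc]=(abc,1)$, so $\sigma_{[a.bc]}=\sigma_{(abc,1)}$ and $n=3$ works immediately --- no refinement to longer words is needed. (Your line ``$\sigma_{[a.bc]}=\sigma_{(bc,1)}$ up to relabelling'' is a slip: $(bc,1)=[b.c]\supsetneq[a.bc]$.) The paper then notes the explicit correspondence $\pi((w,i))=(\rho(w),i)$ for $w\in L_{n_0+2}(\Omega)$, where $\rho$ sends a word of length $n_0+2$ in $\Omega$ to the corresponding word of length $3$ in $Y$; this gives $\pi\circ\sigma_{(w,1)}\circ\pi^{-1}=\sigma_{(\rho(w),1)}$ and hence $\{\sigma_{(w,1)}:w\in L_{n_0+2}(\Omega)\}$ generates $G_T'$. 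Your sliding-block bookkeeping (``a word in $L_m(Y)$ corresponds to a word of length $m+n_0-1$ in $L(X)$'') is the same observation, so once you drop the detour your argument and the paper's coincide, with the concrete value $n=n_0+2$.
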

\begin{proof} Using Proposition \ref{PropositionConjugateSubshift}, we can find a minimal subshift $(\Omega',T')$ satisfying the condition $(\dag)$ that is conjugate to $(\Omega,T)$. Let $\pi$ and $n_0>0$  be as in the proof of
Proposition \ref{PropositionConjugateSubshift}.  The homeomorphism $\pi$ gives rise to an isomorphism of the commutator subgroups $G_T'$ and $G_{T'}'$. Note that by definition of $\pi$,  $$\pi((w,i)) = (\rho(w),i)\mbox{ for every }w\in L_{n_0+2}(\Omega),$$ where  $\rho(w) = (w_0\cdots w_{n_0-1})(w_1\cdots w_{n_0})(w_1\cdots w_{n_0+1})$ for $w= w_0w_1\cdots w_{n_0+1}\in L_{n_0+2}(\Omega)$.
Therefore, $$\pi\circ \sigma_{(w,1)}\circ \pi^{-1} = \sigma_{(\rho(w),1)}\mbox{ for every }w\in L_{n_0+2}(\Omega).$$ Applying Theorem \ref{TheoremGenerators}, we conclude that the elements $\sigma_{(w,1)}$, $w\in L_{n_0+2}(\Omega)$, generate the group $G_T'$.
\end{proof}


%
%

\section{Defining Relations}\label{SectionRelations}

In this section we describe the set of defining relations for the commutator subgroups of topological full groups.  In the following theorem we use the symbols $x_{(w,i)}$ indexed by cylinder sets $(w,i)$ as a basis for the free group.

%
%

\begin{theorem}\label{TheoremMain}   Let $(\Omega,T)$ be a minimal subshift over a finite alphabet.
 (1) There exists $n\geq 3$ such that the commutator subgroup  $G_T'$ is isomorphic to the group $\Gamma_\Omega$ generated by  $$<x_{(w,k)}, w\in L(\Omega),\; |w|\geq n,\; k\in \mathbb Z>,$$  subject to the following relations: for every $w,v\in L(\Omega)$, $|w|,|v|\geq n$, $i,j\in \mathbb Z$,  and a cylinder partition $C$ of $(w,i)$,
\begin{flalign}
& \left(x_{(w,i)}\right)^3=1   \label{EqRel1} \\
& \left(x_{(w,i)}\cdot x_{(w,i+1)}\right)^2 = 1   \label{EqRel2} \\
& x_{(w,i+1)} = x_{(w,i)} \ast x_{(w,i+2)}  \label{EqRel3} \\
& x_{(w,i)} = \prod_{(s,k) \in C}x_{(s,k)} \label{EqRel4}
\end{flalign}
and
\begin{equation} \label{EqRel5} [x_{(w,i)},x_{(v,j)}] = 1,\mbox{ whenever }(w,i),(v,j)\mbox{ are 3-disjoint}.
\end{equation}

(2)  The isomorphism  $\psi : \Gamma_\Omega \rightarrow G_T'$ is implemented by the rule $\psi(x_{(w,k)}) = \sigma_{(w,k)}$ for all $w\in L(\Omega)$, $k\in \mathbb Z$, $|w|\geq n$.
 In particular, the group $\Gamma_\Omega$ is generated by the elements $<x_{(w,1)}$, $w\in L_n(\Omega)>$.

 (3) If the system $(\Omega,T)$ satisfies the condition $(\dag)$, then $n = 3$.
\end{theorem}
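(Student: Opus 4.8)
The plan is to prove part (3) — that under $(\dag)$ one may take $n=3$ — since establishing this simultaneously yields parts (1) and (2) for systems satisfying $(\dag)$; the general case of (1),(2) then follows by transporting the presentation through the conjugacy $\pi$ of Proposition~\ref{PropositionConjugateSubshift}, exactly as in the proof of Corollary~\ref{CorollaryN_Generators}: $\pi$ sends $\sigma_{(w,i)}$ to $\sigma_{(\rho(w),i)}$, hence carries every relation of the form (\ref{EqRel1})--(\ref{EqRel5}) to another of the same form, so $\Gamma_\Omega$ for the original system is presented with $n=n_0+2$. Fix a system satisfying $(\dag)$, let $\Gamma_\Omega$ be the group with presentation $n=3$, and define $\psi\colon\Gamma_\Omega\to G_T'$ by $x_{(w,i)}\mapsto\sigma_{(w,i)}$. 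Then $\psi$ is well defined by Corollary~\ref{CorollarySigmaRelations} (whose ``$*$'' and ``$3$-disjoint'' match (\ref{EqRel1})--(\ref{EqRel5}) verbatim) and surjective by Theorem~\ref{TheoremGenerators}; everything reduces to the injectivity of $\psi$.

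Before attacking that I would record identities already implied by (\ref{EqRel1})--(\ref{EqRel5}) in $\Gamma_\Omega$. First, the factors on the right of (\ref{EqRel4}) commute pairwise, since the atoms of a cylinder partition are pairwise disjoint subsets of $(w,i)$, hence $3$-disjoint (Remark~\ref{Remark3disjoint}(1)), so (\ref{EqRel5}) applies. Second, and more importantly, the full Matui identity of Lemma~\ref{LemmaMatui} holds in $\Gamma_\Omega$: if $U=(p,a)$ and $V=(q,b)$ are cylinders satisfying its hypotheses and $W=TU\cap T^{-1}V$ (again a cylinder, say $W=(r,c)$), then $x_U*x_V=x_W$. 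One derives this from (\ref{EqRel3})--(\ref{EqRel5}): split $x_U=x_{T^{-1}W}\cdot x_{U\setminus T^{-1}W}$ and $x_V=x_{TW}\cdot x_{V\setminus TW}$ by (\ref{EqRel4}); the disjointness hypotheses force the ``leftover'' factors $x_{U\setminus T^{-1}W}$ and $x_{V\setminus TW}$ to be $3$-disjoint from every factor appearing when $x_U*x_V$ is written out, so (\ref{EqRel5}) lets them cancel, leaving $x_{T^{-1}W}*x_{TW}=x_{(r,c-1)}*x_{(r,c+1)}$, which is $x_{(r,c)}=x_W$ by (\ref{EqRel3}). Consequently the convolution formulas of Proposition~\ref{PropositionGroupWords}(3),(4) hold in $\Gamma_\Omega$ as well, and together with (\ref{EqRel4}) they reduce every $x_{(w,i)}$ to a word in the generators $x_{(w,1)}$, $w\in L_3(\Omega)$ — this gives the ``in particular'' clause of part~(2) once $\psi$ is shown injective.

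For injectivity I would fix $\omega,\omega'$ in distinct $T$-orbits and use $G_T'=G_{T,\omega}'\cdot G_{T,\omega'}'$ (Theorem~\ref{TheoremPermutationProduct}) with the KR families $\{\Xi_n\},\{\Xi_n'\}$ of Proposition~\ref{PropositionExistKRPartitions}, choosing $\omega'$ so that no length-$3$ cylinder meets both the top two levels of an $\omega$-tower (for all large $n$) and the top two levels of an $\omega'$-tower — possible, since these ``bad'' regions decrease to $\{T^{-1}\omega,T^{-2}\omega\}$ and $\{T^{-1}\omega',T^{-2}\omega'\}$. Let $\Gamma_1,\Gamma_2\le\Gamma_\Omega$ be generated respectively by the $x_{(u_n.rv_n,i)}$ with $0\le i\le|r|-3$ — the interior $\omega$-tower atoms, resp. $\omega'$-tower atoms, all genuine cylinders of length $\ge 3$. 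The steps are: (i) $\psi$ restricts to isomorphisms $\Gamma_1\cong G_{T,\omega}'$ and $\Gamma_2\cong G_{T,\omega'}'$, because by Proposition~\ref{PropositionGroupPermutations} the targets are $\bigcup_n\prod_{r\in R_n}\mathrm{Alt}(|r|)$, the level-$n$ relations among the generators are the alternating-group relations of Lemma~\ref{LemmaAlternaingGroupRelations} — instances of (\ref{EqRel1})--(\ref{EqRel3}),(\ref{EqRel5}) — and the inclusions of Proposition~\ref{PropositionGroupPermutations}(v) are instances of (\ref{EqRel4}) along the return-word factorizations $r=r_1\cdots r_k$; (ii) whenever $\sigma_{(v,j)}\in G_{T,\omega}$ one has $x_{(v,j)}\in\Gamma_1$ (refine $(v,j)$ along $\Xi_n$ for large $n$, which then meets only interior atoms since $\sigma_{(v,j)}\in G_{T,\omega}$ forces $(v,j)$ to avoid $T^{-1}\omega$ and $T^{-2}\omega$, then apply (\ref{EqRel4})), and symmetrically for $\Gamma_2$; (iii) $\Gamma_\Omega=\Gamma_1\Gamma_2$, by first placing each generator $x_{(w,i)}$ in $\Gamma_1$ or $\Gamma_2$ via (ii) and the choice of $\omega'$, then upgrading $\langle\Gamma_1,\Gamma_2\rangle=\Gamma_1\Gamma_2$ through $\Gamma_2\Gamma_1\subseteq\Gamma_1\Gamma_2$, using (\ref{EqRel5}) on $3$-disjoint pairs of generators and the Matui identity together with (\ref{EqRel4}) on overlapping pairs via a common refinement; (iv) $G_{T,\omega}'\cap G_{T,\omega'}'$ is generated by the $\sigma_{(v,j)}$ lying in $G_{T,\omega}\cap G_{T,\omega'}$, so by (ii) the subgroup $\Gamma_1\cap\Gamma_2$ maps onto it. Granting (i)--(iv): if $g=g_1g_2\in\ker\psi$ with $g_i\in\Gamma_i$, then $\psi(g_1)=\psi(g_2)^{-1}\in G_{T,\omega}'\cap G_{T,\omega'}'$ lifts uniquely into $\Gamma_1\cap\Gamma_2$ by (iv) and (i), forcing $g_1=g_2^{-1}$ and hence $g=1$.

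I expect the main obstacle to be step (iii), specifically the inclusion $\Gamma_2\Gamma_1\subseteq\Gamma_1\Gamma_2$: it holds in the image $G_T'$ only because $G_{T,\omega}'\cdot G_{T,\omega'}'$ happens to be a group, and lifting each rewriting $g_2g_1=g_1'g_2'$ to $\Gamma_\Omega$ means checking it is a consequence of (\ref{EqRel1})--(\ref{EqRel5}); this is where the genuine combinatorial work with cylinder partitions, the operation ``$*$'', and the convolutions of Proposition~\ref{PropositionGroupWords} is concentrated. Step (iv) is delicate for a related reason — it requires factoring an element common to the two direct-limit groups through $3$-cycles respecting \emph{both} positive half-orbits, i.e. through a common refinement of the two tower systems. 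Once injectivity is secured, $\psi$ is the desired isomorphism; the finite generating set of part~(2) comes from the reduction in the second paragraph, and ``$n=3$'' is part~(3) itself.
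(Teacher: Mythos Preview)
Your overall architecture matches the paper's: reduce to $(\dag)$ with $n=3$, build the surjection $\psi$, set up the subgroups $\Gamma_1,\Gamma_2\le\Gamma_\Omega$ over KR partitions at $\omega,\omega'$, and identify $\psi|_{\Gamma_i}$ with $G_{T,\omega}'$, $G_{T,\omega'}'$ using the $\mathrm{Alt}(n)$ presentation (your step (i) is exactly Lemmas~\ref{LemmaAlternaingGroupRelations}--\ref{LemmaPresentationPermutationGroup}). The divergence is in how injectivity is finished.

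The paper does \emph{not} attempt to prove $\Gamma_2\Gamma_1\subseteq\Gamma_1\Gamma_2$ abstractly. Instead it defines a set map $\phi:G_T'\to\Gamma_\Omega$ by $\phi(g)=\phi_\omega(g_\omega)\phi_{\omega'}(g_{\omega'})$ for any factorization $g=g_\omega g_{\omega'}$, checks well-definedness (this is the content of your step (iv): $\phi_\omega$ and $\phi_{\omega'}$ agree on $G_{T,\omega}'\cap G_{T,\omega'}'$ because, by the ``in particular'' of Lemma~\ref{LemmaPresentationPermutationGroup}, both send any $\sigma_{(w,i)}$ in the intersection to $x_{(w,i)}$), and then proves $\phi\psi=\mathrm{id}_{\Gamma_\Omega}$ directly. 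The key combinatorial device, and the idea missing from your sketch of (iii), is the interior/boundary split in Lemma~4.5: given $x=\prod_{m=1}^k x_{(w_m,d_m)}$, choose a \emph{single} $\omega$-partition $\Xi_n$ refining every $(w_m,d_m)$ and with $\omega'\notin\bigcup_{|i|\le 3k}T^i[u_n.v_n]$; use (\ref{EqRel4}) to write $x_{(w_m,d_m)}=p_m q_m$, where $p_m$ collects the atoms at height between $3m$ and $|r|-1-3m$ and $q_m$ the rest. The point is that $q_m$ is supported in the clopen neighbourhood $\bigcup_{|i|\le 3k}T^i[u_n.v_n]$ of the base, which is $3$-disjoint from the support of every later $p_l$ ($l>m$); so by (\ref{EqRel5}) alone each $q_m$ commutes past $p_{m+1},\dots,p_k$, giving $x=(p_1\cdots p_k)(q_1\cdots q_k)\in\Gamma_\omega\cdot\Gamma_{\omega'}$ and simultaneously $\phi(\psi(x))=x$. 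Simplicity of $G_T'$ then makes $\phi$ an isomorphism.

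Your proposed route to (iii) --- commuting $3$-disjoint pairs and handling ``overlapping pairs via Matui and a common refinement'' --- does not come with a mechanism: Matui's identity governs $a*b$, not the product $ab$, so it does not by itself produce a rewriting $g_2g_1=g_1'g_2'$. The paper's trick is both simpler (it uses only (\ref{EqRel4}) and (\ref{EqRel5}), never (\ref{EqRel3}) or Lemma~\ref{LemmaMatui}) and uniform in the word length $k$. Your derivation of Lemma~\ref{LemmaMatui} inside $\Gamma_\Omega$ is correct and pleasant, but in the paper's argument it is used only for the finite-generation clause (via Proposition~\ref{PropositionGroupWords}), not for injectivity; likewise your special choice of $\omega'$ is unnecessary once the $p_m/q_m$ split is in hand --- one only needs $\omega'$ to avoid the fixed clopen neighbourhood of the base, which any $\omega'$ off the $T$-orbit of $\omega$ does for $n$ large.
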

\begin{proof}  Let $n$ be as in the proof of  Corollary \ref{CorollaryN_Generators}. Then $(\Omega,T)$ is isomorphic to an $n$-fold minimal subshift satisfying ($\dag$). Thus, without loss of generality, we can assume that the system $(\Omega,T)$ satisfies the condition $(\dag)$ and  the group $G_T'$ is generated by $\sigma_{(abc,1)}$, $abc\in L_3(\Omega)$. Hence, it suffices to establish the theorem for  $n = 3$.

 Denote by $N$ the normal closure of the relations (\ref{EqRel1})-(\ref{EqRel5}) in the free group $F(S)$ with  basis $S = <x_{(w,k)}, w\in L(\Omega),\; |w|\geq 3,\; k\in \mathbb Z>.$
Denote the quotient group $F(S)/N$ by $\Gamma$.  To simplify the notation we will be skipping the bar in  $\bar x_{(w,k)}\in \Gamma$.

  Consider  the map $\psi : \Gamma \rightarrow G_T'$ such that  $\psi(x_{(w,k)}) = \sigma_{(w,k)}$ for every $(w,k)\in L(\Omega)\times \mathbb Z$. Extend $\psi$ to any element of $\Gamma$ by juxtaposition.
  The fact that the generators $\{\sigma_{(w,k)}\}$ of $G_T'$ satisfy  the relations (\ref{EqRel1}) -- (\ref{EqRel5}) (see Corollary \ref{CorollarySigmaRelations}) implies that
   $\psi$ is a well-defined homomorphism of the groups. Note that this, in particular, means that the group $\Gamma$ is non-trivial.

   Fix a sequence $\omega\in \Omega$.  For each $n$ set $u_n = \omega[-n,-1]$ and $v_n = \omega[0,n]$.   Denote by $R_n$ the set of return words to $u_n.v_n$.
Denote by $\Gamma_\omega$ the subgroup of $\Gamma$ generated by the elements  \begin{equation}\label{EqGenGx}\left <x_{(u_n.rv_n, i)} : r\in R_n,\; 0\leq i \leq |r| - 3,\; n\geq 1\right >,\end{equation} where $(u.rv,i) = T^i[u.rv]$, cf. Section \ref{SectionPermutations}. Note that $\psi(\Gamma_\omega) = G_{T,\omega}'$ (Proposition \ref{PropositionGroupPermutations}). Hence $\Gamma_\omega$ is non-trivial. In the following three lemmas we will prove that $\Gamma_\omega$ and $G_{T,\omega}'$ are isomorphic.

\begin{lemma}[\cite{VershikVsemirov:2008}]\label{LemmaAlternaingGroupRelations} For $n\geq 5$, the group $\textrm{Alt}(n)$ is isomorphic to the group generated by $y_0$,..,$y_{n-3}$ satisfying the relations

 \begin{equation*} y_i^3=1,\quad 0=1,\ldots,n-3,
 \end{equation*}
 \begin{equation*}
(y_i\cdot y_{i+1})^2 = 1, \quad  i = 0,\ldots, n-4,
 \end{equation*}
 \begin{equation*}
[y_i,y_j] = 1, \quad i,j=0,\ldots, n-3, \; |i-j| >2,
 \end{equation*}
 \begin{equation*}
y_{i+1} = y_i\ast y_{i+2}, \quad i=0,\ldots, n-5.
 \end{equation*}

The isomorphism is given by the mapping $y_i\mapsto (i,i+1,i+2)$, $i=0,\ldots, n-3$.
\end{lemma}

\begin{lemma}\label{LemmaGTW_presentation}   The group $G_{T,\omega}'$ is completely determined by the generators \begin{equation*}\left <\sigma_{(u_n.rv_n, i)} : r\in R_n,\; 0\leq i \leq |r| - 3,\; n\geq 1\right >\end{equation*}  and the relations:  for each $n\geq 1$ and $r\in R_n$,
 \begin{equation}\label{EqPermRel1} (\sigma_{(u_n.rv_n, i)})^3=1,\quad 0=1,\ldots,n-3,
 \end{equation}
 \begin{equation}\label{EqPermRel2}
(\sigma_{(u_n.rv_n, i)}\cdot \sigma_{(u_n.rv_n, i+1)})^2 = 1, \quad  i = 0,\ldots, |r|-4,
 \end{equation}
 \begin{equation}\label{EqPermRel3}
[\sigma_{(u_n.rv_n, i)},\sigma_{(u_n.rv_n, j)}] = 1, \quad i,j=0,\ldots, |r|-3, \; |i-j| >2,
 \end{equation}
 \begin{equation}\label{EqPermRel4}
\sigma_{(u_n.rv_n, i+1)} = \sigma_{(u_n.rv_n, i)}\ast \sigma_{(u_n.rv_n, i+2)}, \quad i=0,\ldots, |r|-5,
 \end{equation}
for each $n\geq 1$ and  distinct $r,r'\in R_n$,
\begin{equation}\label{EqPermRel5}
[\sigma_{(u_n.rv_n, i)},\sigma_{(u_n.r'v_n, j)}] = 1, \quad 0\leq  i \leq |r|-3,\; 0\leq j\leq |r'|-3,
\end{equation}
and for each $n\geq 1$, $r\in R_n$, and $0 \leq i \leq |r|-3$,
\begin{equation}\label{EqPermRel6}
 \sigma_{(u_n.rv_n, i)} = \prod_{r'\in R_{n+1}}\prod_{\substack{ 0\leq j\leq |r'|-3 \\ (u_{n+1}.r'v_{n+1}, j)  \subset (u_n.rv_n, i)}}\sigma_{(u_{n+1}.r'v_{n+1}, j)}
\end{equation}
\end{lemma}
\begin{proof} Denote by $Q$ the group with presentation $<\bigcup_n S_n,R>$, where $S_n = \left <\sigma_{(u_n.rv_n, i)} : r\in R_n,\; 0\leq i \leq |r| - 3\right >$ and $R$ is the set of relations (\ref{EqPermRel1})--(\ref{EqPermRel6}).  Note that the group $\left(G_{T,\omega}\right)'$ satisfies the relations (\ref{EqPermRel1}) -- (\ref{EqPermRel6}) (Proposition \ref{PropositionGroupPermutations} and Corollary \ref{CorollarySigmaRelations}). Hence, $Q$ maps onto $G_{T,\omega}$ and $Q$ is non-trivial.
Denote by $Q_n$ the subgroup of $Q$ generated by $S_n$. Note that $\bigcup Q_n = Q$.

By Proposition \ref{PropositionGroupPermutations}, the group $G_{T,\omega}'$ is the increasing union of finite subgroups $(G_{T,\omega}^{(n)})'$ isomorphic to the direct product  of alternating groups $\{\textrm{Alt}(|r|)\}$, $r\in R_n$. Each subgroup $(G_{T,\omega}^{(n)})'$ is generated by $\{ \sigma_{(u_n.rv_n, i)}\}$, $r\in R_n$, $i=0,\ldots,|r|-3$ (Proposition \ref{PropositionGroupPermutations}).

Note that for each $r\in R_n$ the element  $\sigma_{(u_n.rv_n, i)}$ corresponds to the permutation $(i,i+1,i+2)$ acting within the tower associated with the return word
$r$.   Thus, in view of Lemma \ref{LemmaAlternaingGroupRelations} the relations (\ref{EqPermRel1})--(\ref{EqPermRel4}) give a presentation for the copy of $\textrm{Alt}(|r|)$ in
$(G_{T,\omega}^{(n)})'$.

For distinct $r,r'\in R_n$ (distinct $T$-towers) and $0\leq i\leq |r|- 3$, $0\leq i'\leq |r'|- 3$, the clopen sets $(u_n.rv_n, i)$ and $(u_n.r'v_n, i')$ are 3-disjoint (Remark \ref{Remark3disjoint}). This shows that the relation (\ref{EqPermRel5}) holds in $(G_{T,\omega}^{(n)})'$. Observe that this relation is responsible for the direct product structure in $(G_{T,\omega}^{(n)})'$. Thus, we conclude that  $(G_{T,\omega}^{(n)})'$ is completely determined by the generators
$$\left<\sigma_{(u_n.rv_n, i)} : r\in R_n,\; 0\leq i \leq |r| - 3\right >$$
and the relations  (\ref{EqPermRel1})--(\ref{EqPermRel5}).  It follows that there exists a natural epimorphism $\psi_n: (G_{T,\omega}^{(n)})' \rightarrow Q_n$.

The relation (\ref{EqPermRel6}) allows us to express the generators of $(G_{T,\omega}^{(n)})'$ through the generators of $(G_{T,\omega}^{(n+1)})'$. Note that $$\psi_{n+1}|_{(G_{T,\omega}^{(n)})'} = \psi_n.$$
Hence the sequence of homomorphisms $\{\psi_n\}$ naturally extends to an epimorphism $\psi : G_{T,\omega}' \rightarrow Q$.  Since $G_{T,\omega}'$ is simple (Proposition \ref{PropositionGroupPermutations}), we conclude that $\psi$ must be a group isomorphism.
\end{proof}

\begin{lemma}\label{LemmaPresentationPermutationGroup}
The group $G_{T,\omega}'$ is isomorphic to the group  $\Gamma_\omega$.  The isomorphism is implemented by the map $\phi_\omega$ sending $\sigma_{(u_n.rv_n, i)}$ to $x_{(u_n.rv_n, i)}$. In particular, if $\sigma_{(w,i)}\in G_{T,\omega}'$, then $\phi_\omega(\sigma_{(w,i)}) = x_{(w,i)}$.
\end{lemma}
\begin{proof}

 Note that the relations (\ref{EqPermRel1}), (\ref{EqPermRel2}), (\ref{EqPermRel4}) are analogous to the relations (\ref{EqRel1}), (\ref{EqRel2}), and (\ref{EqRel3}). The cylinder sets appearing in the relations (\ref{EqPermRel3}) and (\ref{EqPermRel5}) are 3-disjoint (Remark \ref{Remark3disjoint}). Hence, these relations are consequences of the relation (\ref{EqRel5}). The relation (\ref{EqPermRel6}) follows from  (\ref{EqRel4}). Thus, the relations defining the group $G_{T,\omega}'$ (see Lemma \ref{LemmaGTW_presentation}) are a subset of the relations in the group $\Gamma_\omega$.  It follows that  the mapping  $\phi_\omega : G_{T,\omega}' \rightarrow \Gamma_\omega$ given by  $\phi_\omega(\sigma_{(u_n.rv_n, i)}) = x_{(u_n.rv_n, i)}$ gives rise to a group homomorphism.

 Since $\Gamma_\omega \neq \{1\}$ and the group $G_{T,\omega}'$ is simple (Proposition \ref{PropositionGroupPermutations}), we obtain that $\phi_\omega$ is a group isomorphism. This completes the proof of the lemma.
\end{proof}

Fix  $\omega,\omega'\in \Omega$ lying in distinct $T$-orbits. By construction  of $\phi_\omega$ (Lemma \ref{LemmaPresentationPermutationGroup}), if $r\in G_{T,\omega}'\cap G_{T,\omega'}'$, then $\phi_\omega(r) = \phi_{\omega'}(r)$.

 Since $G_T' = G_{T,\omega}\cdot G_{T,\omega'}'$  (Theorem \ref{TheoremPermutationProduct}), we can extend the map $\phi_\omega$ from Lemma \ref{LemmaPresentationPermutationGroup} to $\phi : G_T' \rightarrow \Gamma$ as follows.  For each $g\in G_{T}'$,  find elements $g_\omega\in G_{T,\omega}'$ and $g_{\omega'}\in G_{T,\omega'}'$ such that $g = g_\omega g_{\omega'}$. Set $\phi(g) = \phi_\omega(g_\omega) \phi_{\omega'}(g_{\omega'})$. The definition is independent of the choice of $g_\omega$ and $g_{\omega'}$. Indeed, if $g = h_\omega h_{\omega'}$ is another factorization with $h_\omega\in G_{T,\omega}'$ and $h_{\omega'}\in G_{T,\omega'}'$, then we can find $r\in G_{T,\omega}' \cap G_{T,\omega'}'$ such that $h_{\omega} = g_\omega r$ and $h_{\omega'} = r^{-1}g_{\omega'}$. Since the map $\phi$ restricted to $G_\omega$ ($G_{\omega'}$) equals the homomorphism $\phi_\omega$ ($\phi_{\omega'}$), we obtain that $$\phi(g) = \phi_\omega(h_\omega) \phi_{\omega'}(h_{\omega'}) = \phi_\omega(g_\omega r) \phi_{\omega'}(r^{-1}g_{\omega'}) = \phi_\omega(g_\omega)\phi_\omega(g_{\omega'}).$$

\begin{lemma} (1) The mapping $\phi: G_T'\rightarrow \Gamma$ is an epimorphism.

(2) $\phi(\sigma_{(w,k)}) = x_{(w,k)}$ for every $w\in L(\Omega)$ and $k\in \mathbb Z$, i.e., $\phi = \psi^{-1}$. \end{lemma}
\begin{proof}  Let $Q  = \prod_{m=1}^k \sigma_{(w_m,d_m)}$, where $(w_m,d_m)\in L(\Omega)\times \mathbb Z$, $m=1,\ldots,k$. To prove that $\phi$ is an onto homomorphism, it suffices to show that $\phi(Q) = \prod_{m=1}^kx_{(w_m,d_m)}$.

   Fix $\omega,\omega'\in \Omega$ lying in distinct $T$-orbits. The idea of the proof is the following. We  will explicitly  show how to  rewrite $Q$ in the form $Q = Q_\omega Q_{\omega'}$, where $Q_\omega\in G_{T,\omega}'$ and $Q_{\omega'}\in G_{T,\omega'}'$, whereby getting that $\phi(Q) = \phi(Q_\omega)\phi(Q_{\omega'})$.  The rewriting process will only rely on the relations present in Corollary \ref{CorollarySigmaRelations}, i.e., the relations that have their direct counterparts in the group $\Gamma$. This will imply that repeating the same rewriting process  for $x = \prod_{i=1}^kx_{(w_i,d_i)}$ we will obtain that $x = \phi(Q_\omega)\phi(Q_{\omega'})$.

  Using Proposition \ref{PropositionExistKRPartitions} choose $n$ so big that for $u = \omega[-n,-1]$ and $v=\omega[0,n]$ (i)  the sets $(u.v,i):=T^i[u.v]$, $-3k\leq i\leq 3k+2 $, are disjoint; (ii) the Kakutani-Rokhlin partition $\Xi$ associated with the family of return words $ R_{u.v}$ refines the cylinder sets $(w_m,d_m)$, $m=1,\ldots, k$; (iii) for each $m=1,\ldots,k$ and $-3k\leq i \leq 3k$, the cylinder set $(u.v,i)$ is either contained in $(w_m,d_m)$ or disjoint from it.  Since $\omega$ and $\omega
 '$ lie in distinct $T$-orbits, we can further ensure that  $n$ is chosen so big that  $\omega' \notin B:=\bigcup_{i=-3k}^{3k+2}T^i[u.v]$. Note that the union is disjoint.

 Since  the partition $\Xi_n$ refines the cylinder set $(w_m,d_m)$,  $m=1,\ldots,k$, the element $\sigma_{(w_m,d_m)}$ can be factored out as
\begin{equation}\label{EqDecomposition1} \sigma_{(w_m,d_m)} = \prod_{r\in R_{u.v}}\prod_{\substack{ 0\leq i\leq |r|-1\\
(u.rv,i)\subset (w_m,d_m)} } \sigma_{(u.rv,i)}.\end{equation}

Note that  the cylinder sets $(u.rv,i)$ appearing in Equation (\ref{EqDecomposition1})  are 3-disjoint as  disjoint subsets of $(w_m,d_m)$ (Remark \ref{Remark3disjoint}).  Thus,  the elements $\sigma_{(u.rv,i)}$ in (\ref{EqDecomposition1})  commute.

For each $m=1,\ldots,k$, set \begin{equation}\label{EqP_m}P_m =  \prod_{r\in R_{u.v}}\prod_{\substack{ 3m \leq i\leq |r|-1-3m\\
 (u.rv,i) \subset (w_m,d_m)}} \sigma_{(u.rv,i)}\end{equation}
and

\begin{equation}\label{EqQ_m}Q_m =  \prod_{r\in R_{u.v}}\prod_{\substack{ 0 \leq i < 3m \\
 |r|-1-3m < i \leq |r| -1 \\
 (u.rv,i) \subset (w_m,d_m)}} \sigma_{(u.rv,i)},\end{equation}
 It follows that
 \begin{equation}\label{EqSigmaDecomposition}\sigma_{(w_m,d_m)} = P_mQ_m.\end{equation}

  Observe that $Q_m$ and $P_l$, $l>m$, have disjoint supports, and, thus, they commute.  It follows that
  \begin{equation}\label{EqQDecomposition}Q = P_1Q_1\cdots P_k Q_k = (P_1\cdots P_k) (Q_1\cdots Q_k).\end{equation}
 By Proposition \ref{PropositionGroupPermutations}, we have that $P_m\in G_{T,\omega}'$, $m=1,\ldots,k$.

 Now we will  show that $Q_m \in G_{T,\omega'}'$ for each $m=1,\ldots,k$. Since for each return word $r\in R_{u.v}$, $v$ is a prefix of $rv$ and $u$ is a suffix of $ur$, we obtain that
\begin{equation*}  \sigma_{(u.v,i)} = \prod_{r\in R_{u.v}}\sigma_{(u.rv,i)}.\end{equation*}
Therefore, for each $m=1,\ldots,k$,
\begin{equation*} Q_m =  \prod_{\substack{ - 3m \leq i < 3m \\
 (u.v,i)\subset (w_m,d_m)}} \sigma_{(u.v,i)}.\end{equation*}
  Since each element $\sigma_{(u.v,i)}$, $i=-3k,\ldots,3k$, is supported by $B$, we conclude that $\sigma_{(u.v,i)}$ preserves the forward orbit of $\omega'$ and, in view of Proposition \ref{PropositionGroupPermutations}, $\sigma_{(u.v,i)}\in G_{T,\omega'}'$. Therefore, $Q_m\in G_{T,\omega'}'$.

 Consider $x_{(w_m,d_m)}$, $m=1,\ldots,k$. Define elements $p_m,q_m\in \Gamma$ as in Equations (\ref{EqP_m}) and (\ref{EqQ_m}), but using $x_{(u.rv,i)}$ for $\sigma_{(u.rv,i)}$. Note that  Equation (\ref{EqSigmaDecomposition}) relied exclusively on relations from Corollary \ref{CorollarySigmaRelations}. Hence, we can repeat the same argument to check that   $$x_{(w_m,d_m)} = p_mq_m.$$ Since the mapping $\phi$ restricted to $G_{T,\omega}'$ ($G_{T,\omega'}'$)  is an isomorphism, we get that  $p_m = \phi(P_m)$ and $q_m = \phi(Q_m)$. It follows that $$x_{(w_m,d_m)}= p_mq_m = \phi(P_m)\phi(Q_m) = \phi(\sigma_{(w_m,d_m)}).$$ In particular, this equation establishes the second statement of the lemma.    Note  that in view of the relation (\ref{EqRel5}), the elements $q_m$ and $p_l$, $l>m$,  commute.  Hence, by Equation (\ref{EqQDecomposition}), we get that $$\phi(Q) = (p_1\cdots p_k)(q_1\cdots q_k) = \prod_{m=1}^kx_{(w_m,d_m)}.$$
 This completes the proof of  the lemma.
 \end{proof}


 Since the group  $G_T'$ is simple, we obtain that $\phi : G_T'\rightarrow \Gamma$ is an isomorphism.   Theorem \ref{TheoremGenerators} says that  the group $G_T'$ is generated by $\{\sigma_{(abc,1)}\}$, $abc\in L_3(\Omega)$.
 As $\phi(\sigma_{(abc,1)}) = x_{(abc,1)}$, we conclude that the group $\Gamma$ is generated by $\{x_{(abc,1)}\}$, $abc\in L_3(\Omega)$.  This completes the proof of the theorem.
\end{proof}


The following version of Theorem \ref{TheoremMain} presents a set of Tietze transformations needed to switch between the two sets of generators. Let $F(S)$ be the free group with basis $$S = <x_{(w,1)} : w\in L_3(\Omega)>.$$ If $w\notin L_3(\Omega)$, we assume that $x_{(w,1)} = \varepsilon$, the empty word. For $a,b,c\in A$, let
\begin{equation}\label{Equation_X_abc}x_{[a.]} = \prod_{b,c\in A}x_{(abc,1)}\mbox{ and }x_{[.bc]} = \prod_{a\in A}x_{(abc,1)}\end{equation}
and  for each $(w,k)$ with $w=w_0\cdots w_{n-1}\in L_n(\Omega)$, $n\geq 4$, and  $2\leq k \leq n-1$, let
\begin{equation}\label{EquationFreeConvolution1}x_{(w,1)} =  x_{[.w_0w_1]} \ast (x_{[.w_1w_2]}\ast \cdots \ast(x_{[.w_{n-4}w_{n-3}])} \ast x_{(w_{n-3}w_{n-2}w_{n-1},1)})\cdots)\end{equation}
and
\begin{equation}\label{EquationFreeConvolution2}x_{(w,k)}  =   (\cdots ((x_{(w,1)}
 \ast  x_{[w_2.]}) \ast x_{[w_3.]}) \ast \cdots \ast x_{[w_{k}.]}).\end{equation}

\begin{theorem}\label{TheoremTietzeTransformations} Let $(\Omega,T)$ be a minimal subshift satisfying the condition $(\dag)$.  The group $G_T'$ has the presentation $<S|R>$, where
$R$ is the following set of relations: for every $w,v\in L(\Omega)$,  $1\leq i \leq |w|-1$, $1\leq j \leq |v|-1$,  and a  partition $C$ of $(w,i)$ into cylinder sets of the form $(s,k)$, $1\leq k \leq |s|-1$,
\begin{flalign}
& \left(x_{(w,i)}\right)^3=1   \label{EqVerRel1} \\
& \left(x_{(w,i)}\cdot x_{(w,i+1)}\right)^2 = 1  \label{EqVerRel2} \\
& x_{(w,i+1)} = x_{(w,i)} \ast x_{(w,i+2)}   \label{EqVerRel3} \\
& x_{(w,i)} = \prod_{(s,k) \in C}x_{(s,k)}  \label{EqVerRel4}
\end{flalign}
and
\begin{equation}  \label{EqVerRel5}  [x_{(w,i)},x_{(v,j)}] = 1,\mbox{ whenever }(w,i),(v,j)\mbox{ are 3-disjoint}.
\end{equation}
\end{theorem}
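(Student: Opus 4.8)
The plan is to derive Theorem \ref{TheoremTietzeTransformations} from Theorem \ref{TheoremMain} by a sequence of Tietze transformations, using the defining equations (\ref{Equation_X_abc}), (\ref{EquationFreeConvolution1}), (\ref{EquationFreeConvolution2}) as the dictionary between the two generating sets. By part (3) of Theorem \ref{TheoremMain}, under the condition $(\dag)$ we have $n=3$, so $G_T' \cong \Gamma_\Omega$ with presentation on generators $\{x_{(w,k)} : w \in L(\Omega),\, |w|\geq 3,\, k\in\mathbb Z\}$ and relations (\ref{EqRel1})--(\ref{EqRel5}); moreover $\Gamma_\Omega$ is already known to be generated by the finite set $\{x_{(w,1)} : w \in L_3(\Omega)\}$. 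The goal is to rewrite this infinite presentation on the small generating set $S$.

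First I would set up the direction ``$\Gamma_\Omega$ is a quotient of $\langle S\mid R\rangle$''. Starting from $\langle S\mid R\rangle$, introduce new generator symbols $x_{[a.]}$, $x_{[.bc]}$ (for $a,b,c\in A$), and $x_{(w,k)}$ (for all $w\in L(\Omega)$ with $|w|\geq 4$ and admissible $k$, plus the cases $x_{(w,1)}$ with $w\in L_3$ already in $S$), each defined by the corresponding equation (\ref{Equation_X_abc}), (\ref{EquationFreeConvolution1}), (\ref{EquationFreeConvolution2}); adding a generator together with a word defining it is a Tietze transformation and does not change the group. One must order these additions correctly so that each right-hand side only involves previously available symbols: first the $x_{[a.]},x_{[.bc]}$ (words in $S$), then $x_{(w,1)}$ for $|w|\geq 4$ by induction on $|w|$ via (\ref{EquationFreeConvolution1}), then $x_{(w,k)}$ for $k\geq 2$ via (\ref{EquationFreeConvolution2}). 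In this enlarged-but-isomorphic presentation the relations (\ref{EqVerRel1})--(\ref{EqVerRel5}) of Theorem \ref{TheoremTietzeTransformations} — which are literally a sub-family of (\ref{EqRel1})--(\ref{EqRel5}) restricted to $1\leq i\leq |w|-1$ — certainly hold once we show (\ref{EqRel1})--(\ref{EqRel5}) hold; so it suffices to check each relation of (\ref{EqRel1})--(\ref{EqRel5}) in $\langle S\mid R\rangle$ under the above substitutions. This is exactly the content of Corollary \ref{CorollarySigmaRelations} and Proposition \ref{PropositionGroupWords}: those statements show that the elements $\sigma_{(w,i)}$ built from $\{\sigma_{[a.bc]}\}$ by precisely the formulas (\ref{Equation_X_abc})--(\ref{EquationFreeConvolution2}) satisfy all relations (\ref{EqRel1})--(\ref{EqRel5}); since every step in those derivations uses only Lemma \ref{LemmaMatui}-type identities that are consequences of relations in $R$ (the $*$-convolution identities, the cube and braid relations, the cylinder-partition relation, and $3$-disjoint commutativity), the same derivations go through formally in $\langle S\mid R\rangle$. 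Hence there is a surjection $\langle S\mid R\rangle \twoheadrightarrow \Gamma_\Omega$.

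For the converse direction, I would exhibit the inverse map. Define $\theta: F(S) \to \Gamma_\Omega$ by $\theta(x_{(w,1)}) = x_{(w,1)}$ for $w\in L_3(\Omega)$ (the latter a generator of $\Gamma_\Omega$, recalling $x_{(w,1)}=1$ if $w\notin L_3(\Omega)$). To see $\theta$ factors through $R$, one checks each of (\ref{EqVerRel1})--(\ref{EqVerRel5}) holds in $\Gamma_\Omega$; but in $\Gamma_\Omega$ the expanded symbols $x_{[a.]}$ etc. are, by (\ref{Equation_X_abc})--(\ref{EquationFreeConvolution2}) and Proposition \ref{PropositionGroupWords}, exactly the elements $x_{(w,k)}$ of the standard presentation, for which (\ref{EqRel1})--(\ref{EqRel5}) — hence their restrictions (\ref{EqVerRel1})--(\ref{EqVerRel5}) — are relations. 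Thus $\theta$ descends to $\bar\theta:\langle S\mid R\rangle \to \Gamma_\Omega$, and it is inverse to the surjection above on generators, so both are isomorphisms. Combined with $G_T' \cong \Gamma_\Omega$, this gives $G_T' \cong \langle S\mid R\rangle$.

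The main obstacle — and the only place real work is needed — is the bookkeeping in showing that \emph{every} relation among (\ref{EqRel1})--(\ref{EqRel5}), for \emph{all} $w,v\in L(\Omega)$ and all $i,j\in\mathbb Z$ (not merely $1\leq i\leq |w|-1$), is a consequence of $R$ after substitution. The restricted index ranges in $R$ force one to first establish, purely from $R$, enough ``shift'' machinery: namely that relation (\ref{EqRel3}) in the form $x_{(w,i+1)} = x_{(w,i)}*x_{(w,i+2)}$ together with (\ref{EqVerRel4}) lets one express any $x_{(w,k)}$ with $k$ outside $[1,|w|-1]$ in terms of in-range generators (extending a word $w$ to a longer admissible word and using cylinder refinement to relate $(w,k)$ to a union of cylinders $(s,\ell)$ with $\ell$ in range), and symmetrically for negative/large $k$. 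Once this normalization is available, the remaining relations reduce to in-range instances, which are in $R$ or follow from Proposition \ref{PropositionGroupWords} as above. I expect this ``index range reduction'' argument, carried out carefully with the convolution formulas (\ref{EquationConvolutions1})--(\ref{EquationConvolutions2}), to be the technical heart; the rest is a routine Tietze-transformation wrap-up invoking Theorem \ref{TheoremMain}.
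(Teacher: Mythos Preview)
Your overall strategy---obtain the presentation from Theorem \ref{TheoremMain} (with $n=3$ under $(\dag)$) by Tietze transformations and then verify maps in both directions---is correct and is essentially what the paper does. Two points deserve comment.

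First, you have the directions of the two maps reversed in your first paragraph. Checking that all of the relations (\ref{EqRel1})--(\ref{EqRel5}) hold in $\langle S\mid R\rangle$ (after Tietze-enlarging the generating set) yields a homomorphism $\Gamma_\Omega \to \langle S\mid R\rangle$, not the surjection $\langle S\mid R\rangle \twoheadrightarrow \Gamma_\Omega$ you claim. Your second paragraph then correctly constructs $\langle S\mid R\rangle \to \Gamma_\Omega$. Since between the two paragraphs you do produce both maps, the argument survives, but the exposition should be corrected.

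Second, and more substantively, the paper handles the ``hard'' direction (the map out of $\Gamma_\Omega$) by a more direct mechanism than your convolution-formula route. Rather than introducing the new generators $x_{(w,k)}$ via (\ref{Equation_X_abc})--(\ref{EquationFreeConvolution2}) and then doing an ``index range reduction'', the paper defines $x_{(w,i)}$ for \emph{arbitrary} $i\in\mathbb Z$ in one stroke: choose any cylinder partition $C$ of $(w,i)$ into sets $(s,k)$ with $1\le k\le |s|-1$ and set $x_{(w,i)}=\prod_{(s,k)\in C} x_{(s,k)}$; well-definedness follows from a common-refinement argument using (\ref{EqVerRel4}) and (\ref{EqVerRel5}). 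With this in hand, the full-range versions of (\ref{EqRel4}) and (\ref{EqRel5}) are immediate, and (\ref{EqRel1})--(\ref{EqRel3}) follow by refining and using commutator identities. This avoids having to argue that Lemma \ref{LemmaMatui}'s identity $\sigma_U*\sigma_V=\sigma_{T(U)\cap T^{-1}(V)}$---which in the paper is proved \emph{dynamically}, not algebraically---is a formal consequence of $R$, something you assert but do not justify. Your final paragraph (extending $w$ and using cylinder refinement) is in fact moving toward the paper's partition definition; carrying it out you would rediscover exactly that argument, so you may as well adopt it from the start and drop the detour through the convolution formulas.
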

\begin{proof} Denote by $\Gamma$ the quotient group $F(S)/N$, where $N$ is the normal closure of $R$ in $F(S)$. Consider an arbitrary cylinder set $(w,i)$, $w\in L(\Omega)$, $i\in \mathbb Z$. Observe that cylinder sets in Equation (\ref{EqVerRel4}) are 3-disjoint (Remark \ref{Remark3disjoint}). Hence the corresponding factors commute (see the relation (\ref{EqVerRel5})).   Find a   partition $C$ of $(w,i)$ into cylinder sets of the form $(s,k)$, $1\leq k \leq |s|-1$. Define $$ x_{(w,i)} = \prod_{(s,k) \in C}x_{(s,k)}.$$ We claim that the definition of $x_{(w,k)}$ does not depend on a particular choice of the partition. Indeed, if $C'$ is another partition of $(w,i)$, then we can take a cylinder partition $Q$ that refines both $C$ and $C'$. Then the relation (\ref{EqVerRel4}) implies that $ x_{(w,i)} = \prod_{(s,k) \in Q}x_{(s,k)}.$ Furthermore, this shows that the relation (\ref{EqVerRel4}) holds for arbitrary clopen sets $(w,i)$ and arbitrary cylinder partition $C$.

Let $(w,i)$ and $(v,j)$ be arbitrary 3-disjoint cylinder sets. Consider clopen partitions $C_w$ and $C_v$ of $(w,i)$ and $(v,j)$, respectively. Assume that each cylinder atom $(s,k)$ of $C_w$ (of $C_v$) is of the form $1\leq k \leq |w| -1 $  ($1\leq k \leq |v| -1 $). Note that atoms of $C_w$ and $C_v$ are 3-disjoint. It follows from the relation (\ref{EqVerRel5}) and the commutator identities that $$[x_{(w,i)},x_{(v,j)}] = [\prod_{(s,k)\in C_w} x_{(s,k)},\prod_{(q,r)\in C_w} x_{(q,r)}] = 1.$$ This shows that the relation (\ref{EqVerRel5}) holds for arbitrary 3-disjoint cylinder sets.

Using the relations (\ref{EqVerRel4}),(\ref{EqVerRel5}), and the commutator identities, one can also check that the scope of the relations (\ref{EqVerRel1}) -- (\ref{EqVerRel3}) extends to arbitrary cylinder sets. We leave the details to the reader. Thus, the set of relations that define $\Gamma$ include those defining the group  $\Gamma_\Omega$ in Theorem \ref{TheoremMain}.  On the other hand, since $\Gamma_\Omega$ is isomorphic to $G_T'$, it follows from Proposition \ref{PropositionGroupWords} that the relations (\ref{Equation_X_abc}) -- (\ref{EquationFreeConvolution2}) also hold in $\Gamma_\Omega$.

Denote the generators of $\Gamma_\Omega$ by $y_{(w,i)}$. Since  $\Gamma$ and $\Gamma_\Omega$ are defined by the same set of relations, the map $\psi: \Gamma \rightarrow \Gamma_\Omega$ given by $\psi(x_{(w,i)} = y_{(w,i)})$ implements the group isomorphism.
\end{proof}

The following result describes TFGs with solvable word problem. We mention that an alternative proof of the ``only if''  part  of the following theorem has been earlier established by the authors  \cite[Theorem 2.15]{GrigorchukMedynets}.

\begin{theorem}\label{TheoremWordProblem}   The language of the system $(\Omega,T)$ is recursive if and only if the group $G_T'$ has solvable word problem.
\end{theorem}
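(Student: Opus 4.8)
The plan is to prove both directions by connecting word problem in $G_T'$ to membership in $L(\Omega)$, using the finite presentation from Theorem \ref{TheoremTietzeTransformations} together with Corollary \ref{CorollaryWordIdentity}.

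\medskip

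\textbf{The ``if'' direction (recursive language $\Rightarrow$ solvable word problem).} Suppose $L(\Omega)$ is recursive. By Corollary \ref{CorollaryN_Generators} (or the proof of Theorem \ref{TheoremMain}), we may replace $(\Omega,T)$ by a conjugate subshift satisfying $(\dag)$ whose language is still recursive and for which $G_T'$ is generated by $\{\sigma_{(abc,1)} : abc\in L_3(\Omega)\}$, with finite presentation $\langle S\mid R\rangle$ as in Theorem \ref{TheoremTietzeTransformations}. First I would observe that the relation set $R$ is recursive: by Proposition \ref{PropositionDecidabilityDisjointness}, $3$-disjointness of cylinder sets and the cylinder-partition condition are algorithmically verifiable when $L(\Omega)$ is recursive, and the definitions of the derived symbols $x_{(w,i)}$ via Equations (\ref{Equation_X_abc})--(\ref{EquationFreeConvolution2}) are explicit finite words in $S$. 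Hence $G_T'$ has a recursive presentation on a finite generating set, so its word problem is recursively enumerable (enumerate all consequences of $R$). To get decidability one also needs the complement to be recursively enumerable, i.e. one needs to enumerate the \emph{nontrivial} elements. For this I would use the concrete action: given a word $g$ in the generators $\sigma_{(abc,1)}$, one can, using recursiveness of $L(\Omega)$, build a sufficiently fine Kakutani--Rokhlin / cylinder decomposition on which the orbit cocycle of $g$ is constant on atoms, compute the induced permutation of atoms, and check whether it is the identity on every atom; since $g=1$ in $G_T'$ iff $g$ acts trivially on $\Omega$ iff it acts trivially on all atoms of a fine enough partition, and the ``fine enough'' threshold can be bounded in terms of the syllable length of $g$ and computed from $L(\Omega)$, this gives a decision procedure.

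\medskip

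\textbf{The ``only if'' direction (solvable word problem $\Rightarrow$ recursive language).} Suppose $G_T'$ has solvable word problem. Again pass to a $(\dag)$-conjugate; conjugation is an isomorphism of the commutator subgroups and preserves decidability of the word problem, and by the second half of Proposition \ref{PropositionConjugateSubshift} it suffices to show the new language is recursive. Now the key is Corollary \ref{CorollaryWordIdentity}: for $w=w_0\cdots w_{n-1}\in A^n$ with $n\ge 4$,
\[
w\in L(\Omega)\iff \sigma_{[.w_0w_1]}\ast\bigl(\sigma_{[.w_1w_2]}\ast\cdots\ast(\sigma_{[.w_{n-4}w_{n-3}]}\ast\sigma_{[w_{n-3}.w_{n-2}w_{n-1}]})\cdots\bigr)=1\text{ in }G_T',
\]
and each $\sigma_{[.bc]}=\prod_{a\in A}\sigma_{(abc,1)}$, $\sigma_{[a.bc]}$ are explicit words in the finite generating set. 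So the right-hand side is a word in the generators, effectively computable from $w$, and membership $w\in L(\Omega)$ reduces to an instance of the word problem in $G_T'$, which is decidable by hypothesis. For words of length $<4$ one decides membership by checking whether they extend to a word of length $4$ in $L(\Omega)$ (a finite search, using the length-$4$ case). This decides $L_n(\Omega)$ for every $n$, hence $L(\Omega)$ is recursive.

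\medskip

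\textbf{Main obstacle.} The routine-looking but genuinely delicate point is the ``if'' direction: turning a recursively enumerable word problem into a decidable one. Merely having a recursive presentation on a finite generating set only gives semi-decidability of triviality. The substantive work is to show that nontriviality is also semi-decidable, which I expect to handle via the faithful action on $\Omega$: one must argue that $g=1$ in $G_T'$ can be certified by finite data (the induced permutation of the atoms of a computable Kakutani--Rokhlin partition determined by $g$), and conversely that $g\neq 1$ is witnessed on such a finite partition. Making the choice of partition effective --- bounding, from $L(\Omega)$ and the syllable length of $g$, a level $n$ past which $\Xi_n$ separates the support of $g$ --- is the crux; everything else follows from Proposition \ref{PropositionDecidabilityDisjointness}, Proposition \ref{PropositionGroupWords}, and Corollary \ref{CorollaryWordIdentity}.
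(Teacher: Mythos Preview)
Your ``only if'' direction is exactly the paper's argument: reduce membership $w\in L(\Omega)$ to the word problem via Corollary~\ref{CorollaryWordIdentity}, handling short words by extension. (Minor caveat: the iff in that corollary should read $w\in L(\Omega)\iff\sigma_{(w,1)}\neq 1$; the equality-to-$1$ direction as you wrote it is the typo carried over from the corollary's statement, but your reduction to a word-problem instance is unaffected.)

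The ``if'' direction is where you diverge from the paper. You propose to decide $g=1$ by computing the action on a sufficiently fine, effectively constructible cylinder partition; this works, and the bound you worry about is easy to make explicit (a product of $k$ generators $\sigma_{(abc,1)}$ has orbit cocycle bounded by $2k$ and constant on cylinders determined by a window of width $O(k)$, so one enumerates $L_{O(k)}(\Omega)$ and checks whether the net shift is zero on each). The paper instead bypasses all of this with a one-line appeal to Kuznetsov's theorem: a finitely generated, recursively presented, \emph{simple} group has decidable word problem. Since $G_T'$ is simple (Matui) and, by Theorem~\ref{TheoremMain} together with Proposition~\ref{PropositionDecidabilityDisjointness}, has a recursive (hence recursively enumerable) set of defining relations when $L(\Omega)$ is recursive, decidability follows immediately. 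Your route is more elementary---it uses neither simplicity of $G_T'$ nor any off-the-shelf combinatorial-group-theory result---and gives an explicit decision procedure; the paper's route is shorter and highlights that the heavy lifting was already done in establishing simplicity and the presentation.
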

\begin{proof}   In view of Proposition \ref{PropositionConjugateSubshift}, switching to a topologically conjugate subshift, if needed, we can assume that none of the word of length five has repeated letters.   We note that such a conjugation will not affect the recursiveness of the language.

Suppose that the language $L(\Omega)$ of $(\Omega,T)$ is recursive.  Theorem \ref{TheoremMain} and Proposition \ref{PropositionDecidabilityDisjointness} imply  that the set of defining relations is  recursively enumerable. Since the group $G_T'$ is simple, by Kuznetsov's theorem \cite[Proposition 13 on p. 259]{Cohen:1989} we conclude that  $G_T'$ has solvable word problem.

  Now assume that the group $G_T'$ has solvable word problem.    Consider the generators for $G_T'$ from Theorem \ref{TheoremGenerators}.   Fix an algorithm $\mathcal M$ that solves the word problem for $G_T'$.  For a word $w\in A^*$, $|w|>3$, consider the group element $\sigma_{(w,1)}$. In view of  Corollary \ref{CorollaryWordIdentity},  $\sigma_{(w,1)} \neq 1$ in $G_T'$  if and only if $w$ belongs to the language of $\Omega$.  To decide if a word of length two or one belongs to $L(\Omega)$, check if it is a subword of a word in $L_3(\Omega)$.  Then $w$ belongs to $L(\Omega)$ if and only if the algorithm $\mathcal M$ on input $\sigma_{(w,1)}$ has output ``No''.
\end{proof}


\end{document}